\documentclass{amsart}
\newcommand{\B}{\mathcal{B}}

\newtheorem{theorem}{Theorem}[section]
\newtheorem{lemma}[theorem]{Lemma}

\newtheorem{proposition}[theorem]{Proposition}

\theoremstyle{definition}

\newtheorem{example}[theorem]{Example}

\theoremstyle{remark}
\newtheorem{remark}[theorem]{Remark}
\numberwithin{equation}{section}


\usepackage{color,graphicx,amssymb}
\usepackage[multiple]{footmisc}
\begin{document}
\title{A.C.I.M FOR RANDOM INTERMITTENT MAPS : EXISTENCE, UNIQUENESS AND STOCHASTIC STABILITY}
\author{YUEJIAO DUAN }
    \address{Department of Mathematical Sciences, Loughborough University,
Loughborough, Leicestershire, LE11 3TU, UK}
\email{Y.Duan@lboro.ac.uk}

\date{\today}
\keywords{Intermittent maps, Absolutely Continuous Invariant Measure, Stochastic stability}
\begin{abstract}
 We study a random map $T$ which consists of  intermittent maps $\{T_{k}\}_{k=1}^{K}$ and a position dependent probability distribution $\{p_{k,\varepsilon}(x)\}_{k=1}^{K}$. We prove existence of a unique absolutely continuous invariant measure (ACIM) for the random map $T$. Moreover, we show that, as $\varepsilon$ goes to zero, the invariant density of the random system $T$ converges in the $L^{1}$-norm to the invariant density of the deterministic intermittent map $T_{1}$. The outcome of this paper contains a first result on stochastic stability, in the strong sense, of intermittent maps.

\end{abstract}
\maketitle
\pagestyle{myheadings}
\markboth{A.C.I.M FOR RANDOM INTERMITTENT MAPS}{Y. DUAN}

\section{introduction}
Expanding maps of the interval which admit an indifferent fixed point are good testing tools for physical systems with intermittent behaviour. In \cite{P}, Pianigiani proved the existence of ACIM for a certain class of intermittent maps of the interval. Later, polynomial decay of correlations was proved for such systems independently in \cite{LSV,Y2}. More recently, Hu and Vaienti generalized these results to general higher dimensional systems \cite{HV}.
\vskip 10pt

We are interested in perturbations of intermittent maps. In particular when the indifferent fixed point persists under perturbations. Results on statistical stability of intermittent maps with perturbations of this type were obtained in \cite{A,AV}. More recently results on metastability
\footnote{By a metastable system, we mean a system which initially has at least two ACIMs, but once it is perturbed it admits a unique ACIM. Such models were first studied in the expanding case in \cite{GHW}.}of intermittent maps where the neutral fixed point persists under deterministic perturbations were obtained in \cite{BV}. All the results of \cite{A,AV,BV} are concerned with deterministic perturbations of intermittent maps.
\vskip 10pt

In the random setting, i.e., when a system is randomly perturbed, if the random system admits an ACIM $\mu_{R}$ which converges in the weak$-\ast-$topology to an ACIM $\mu$ of the initial system, then we say that the system is stochastically stable in the weak sense. In addition, if the
density $f_{R}^{\ast}$ of $\mu_{R}$ converges to $f^{\ast},$ the density of $\mu,$ in the $L^{1}-$norm, we say the system is stochastically stable in the strong sense.
\vskip 10pt

In \cite{AT} it was proved that intermittent maps of the type studied in \cite{LSV} are stochastically stable in the weak sense. However, there are no results on the strong stochastic stability of such maps.
\vskip 10pt

In this paper, we study a random map $T$ which consists of a collection of intermittent maps $\{T_{k}\}_{k=1}^{K}$ and a probability distribution $\{p_{k,\varepsilon}(x)\}_{k=1}^{K}$. We prove existence of a unique ACIM for the random map $T$. Moreover, we show that, as $\varepsilon$ goes to zero, the invariant density of the random system $T$ converges in the $L^{1}$-norm to the invariant density of $T_{1}$. We obtain our results by using a cone technique. This cone was also used in \cite{RM} to study Ulam approximations for deterministic intermittent map.
\vskip 10pt

In section 2, we present the setup of the problem. Section 3 contains the proof of the existence and uniqueness of the ACIM for the random map. Our main result in this section is Theorem \ref{thm3.1}. Section 4 contains an example of a random map which satisfy our conditions. In section 5, we show that our random maps give rise to an interesting family of 2-dimensional non-uniformly expanding maps  which admit a unique ACIM. Section 6 contains the stochastic stability result. Our main result in this section is Theorem \ref{thm5.1}.

\section{preliminaries}
\subsection{Setup}
Let $(I,\B(I),m)$ be the measure space, where $I=[0,1], \mathcal{B}(I)$ is Borel $\sigma-$algebra and $m$ is Lebesgue measure.
To simplify the notation in the proofs, we consider a random map which consists of two maps.
The proofs for any finite number of maps is similar. We study a position dependent random map
$$T=\{T_{1}(x),T_{2}(x); p_{1}(x), p_{2}(x)\}, \text{ where }$$
$$T_{1}=\begin{cases}
       x(1+2^{\alpha}x^{\alpha}) \quad x\in[0,\frac{1}{2}),\\
       g_{1}(x) \quad \quad \quad x\in[\frac{1}{2},1].
       \end{cases}
\quad\quad
T_{2}=\begin{cases}
       x(1+2^{\beta}x^{\beta}) \quad x\in[0,\frac{1}{2}),\\
       g_{2}(x) \quad \quad \quad x\in[\frac{1}{2},1].
       \end{cases}
,$$
where $0<\beta<\alpha<1,$\footnote{ Note that the assumption that $0<\beta<\alpha<1$ is essential for strong stochastic stability in the strong sense (convergence in $L^{1}$). If for instance $\alpha > 1$, then $T_{1}$ will admit an infinite invariant measure, i.e. this invariant measure does not have an $L^{1}-$density.}\footnote{The results of this paper hold for the following class of maps: Let $0<\alpha<1.$ $\tau$ satisfying\\
$\bullet$ $\tau(0)=0$ and there is a $t_{0}\in(0,1)$ such that $\tau: [0,t_{0})\rightarrow [0,1), \tau: [t_{0},1]\rightarrow [0,1].$\\
$\bullet$ Each branch of $\tau$ is increasing, convex and is $C^{1}; \tau'(0)=1$ and $\tau'(x)>1$ for all $x\in(0,t_{0})\cup(t_{0},1).$\\
$\bullet$ There is a constant $C\in(0,\infty)$ such that $\tau(x)\geq x+ Cx^{1+\alpha}$ for $x\in[0,t_{0}).$\\
The convexity assumption is essential so that the transfer operator satisfies the cone condition $\int\limits_{0}\limits^{x}fdm\leq Ax^{1-\alpha}m(f).$ We choose to work with a well known representative of this family. Namely, the model studied in \cite{LSV}.}  $g_{k}(\frac{1}{2})=0, g'_{k}(x)>1, k=1,2$ and
$p_{k}:[0,1]\rightarrow [0,1]$ is a measurable function such that $p_{1}(x)+p_{2}(x)=1,$ i.e. $p_{1}(x), p_{2}(x)$ are position dependent probabilities. A position dependent random map is understood as a Markov process with transition function
$$\mathbb{P}(x,A)=p_{1}(x)\chi_{A}(T_{1}(x))+p_{2}(x)\chi_{A}(T_{2}(x)),$$
where $A$ is any measurable set in $\B(I)$
and $\chi_{A}$ is the characteristic function of the set $A.$

\subsection{Invariant measures}
The transition function $\mathbb{P}(x,A)$ induces an operator
$E_{T}$ on measures on $(I,\B(I))$ denoted by
\begin{eqnarray*}
E_{T}\mu(A)
&=&\int\limits_{I} \mathbb{P}(x,A)d\mu(x) \\
&=&\int\limits_{I} p_{1}(x)\chi_{A}(T_{1}(x))+p_{2}(x)\chi_{A}(T_{2}(x))d\mu(x)\\
&=&\int\limits_{T^{-1}_{1}(A)} p_{1}(x)d\mu(x)+\int\limits_{T^{-1}_{2}(A)} p_{2}(x)d\mu(x).
\end{eqnarray*}
We say that $\mu$ is $T-$invariant if and only if
$$E_{T}\mu(A)=\mu(A);$$
that is, for any measurable set $A$,
$$\mu(A)=\int\limits_{T^{-1}_{1}(A)} p_{1}(x)d\mu(x)+\int\limits_{T^{-1}_{2}(A)} p_{2}(x)d\mu(x).$$

\subsection{Transfer operators}
If $\mu$ has a density function $f$ with respect to $m$, then $E_{T}\mu$ has also a density
function which we call $\mathcal{L}_{T}f.$ We obtain, for any measurable set $A$,

\begin{eqnarray}\label{2.2}
\int\limits_{A}\mathcal{L}_{T}f dm(x)
&=&E_{T}\mu(A)=\int\limits_{T^{-1}_{1}(A)} p_{1}(x)d\mu(x)+\int\limits_{T^{-1}_{2}(A)} p_{2}(x)d\mu(x)\nonumber\\
&=&\int\limits_{T^{-1}_{1}(A)} p_{1}(x)fdm(x)+\int\limits_{T^{-1}_{2}(A)} p_{2}(x)fdm(x)\nonumber\\
&=&\int\limits_{A} p_{1}(T^{-1}_{1}(x))f(T^{-1}_{1}(x))dm(x)+
   \int\limits_{A} p_{2}(T^{-1}_{2}(x))f(T^{-1}_{2}(x))dm(x)\nonumber\\
&=&\int\limits_{A}P_{T_{1}}(p_{1}f)dm(x)+\int\limits_{A}P_{T_{2}}(p_{2}f)dm(x)\nonumber\\
&=&\int\limits_{A}[P_{T_{1}}(p_{1}f)+P_{T_{2}}(p_{2}f)]dm(x),
\end{eqnarray}
where $P_{T_{1}}$ and $P_{T_{2}}$ are Perron-Frobenius operators \cite{ABG} associated with $T_{1}$ and $T_{2}$
respectively.
Since \eqref{2.2} holds for any measurable set $A$, we will get an almost everywhere equality:
\footnote{ Note that since $p_{1}(x),p_{2}(x) $ are functions of $x, \mathcal{L}_{T}$ is not a convex combination of $p_{1}$ and $p_{2}.$ }
\begin{eqnarray*}
(\mathcal{L}_{T}f)(x)&=&P_{T_{1}}(p_{1}f)(x)+P_{T_{2}}(p_{2}f)(x)\\
&=&\sum\limits_{y\in T_{1}^{-1}(x)}\frac{(p_{1}f)(y)}{|T_{1}^{'}(y)|}
   +\sum\limits_{y\in T_{2}^{-1}(x)}\frac{(p_{2}f)(y)}{|T_{2}^{'}(y)|}.
\end{eqnarray*}

We call $\mathcal{L}_{T}$ the Perron-Frobenius operator associated with the random map $T.$ The properties of $\mathcal{L}_{T}$ resemble the properties of
the classical Perron-Frobenius operator associated with a single deterministic map. $\mathcal{L}_{T}$ satisfies the properties as follows (See \cite{BG} Lemma 3.1):\\
(i)(Linearity)\quad $\mathcal{L}_{T}: L^{1}\rightarrow L^{1}$ is a linear operator.\\
(ii)(Positivity)\quad Let $f\in L^{1}$ and assume $f\geq 0,$ then $\mathcal{L}_{T}f\geq 0.$\\
(iii)(Preservation of integrals)
$$\int\limits_{I}\mathcal{L}_{T}fdm(x)=\int\limits_{I}fdm(x)$$
(iv)(contraction)\quad for any $f\in L^{1},$
$$\parallel \mathcal{L}_{T}f\parallel_{1}\leq \parallel f\parallel_{1}$$
(v)\quad $\mathcal{L}_{T}f=f\Leftrightarrow E_{T}\mu=\mu$, i.e measure $\mu=f\cdot m$ is
$T-$invariant.\\
(vi)(composition)
$$\mathcal{L}_{T\circ R}f=\mathcal{L}_{T}\circ\mathcal{L}_{R}f$$
In particular, $\mathcal{L}_{T^{n}}f=\mathcal{L}^{n}_{T}f.$

\subsection{Notation}
For $x\in I, k\in \{1,2\}$
and partition $\mathcal{P}=\{I_{1},I_{2}\}, I_{1}=[0,\frac{1}{2}], I_{2}=[\frac{1}{2},1],$
we introduce the following definitions
\begin{eqnarray*}
 T(x)&=&T_{k}(x), \quad \text{ with probability } p_{k}(x)\\
 T^{n}(x)&=&T_{k_{n}}\circ T_{k_{n-1}}\circ\cdot\cdot\cdot\circ T_{k_{1}}(x), \text{ with probability }\\
&& p_{k_{n}}(T_{k_{n-1}}\circ\cdot\cdot\cdot\circ T_{k_{1}}(x))\cdot
                     p_{k_{n-1}}(T_{k_{n-2}}\circ\cdot\cdot\cdot\circ T_{k_{1}}(x))\cdot
                     p_{k_{1}}(x), \quad k_{i}\in \{1,2\}\\
 T_{k,i}&=&T_{k}\mid_{I_{i}}.\\
\end{eqnarray*}
We write
$$T^{-1}_{1}x=\{y_{1}, z_{1}\}, \quad y_{1}\leq \frac{1}{2}\leq z_{1} ;$$
$$T^{-1}_{2}x=\{y_{2}, z_{2}\}, \quad y_{2}\leq \frac{1}{2}\leq z_{2} ;$$
$$y_{\ast}=\max\{y_{1},y_{2}\}\in [0,\frac{1}{2}],\quad z_{\ast}=\max\{z_{1},z_{2}\}\in [\frac{1}{2},1];$$
and $m(f)=\int\limits_{0}\limits^{1}f(x)dm(x),$ where $m$ is Lebesgue measure.

\textbf{Cone.} For $A>0,$ define
$$\mathcal{C}_{A}=\{f\in L^{1}\mid f\geq 0, f \text{ decreasing, } \int\limits_{0}\limits^{x}fdm\leq Ax^{1-\alpha}m(f)\}.$$

\section{existence and uniqueness of acim }
\subsection{Sufficient conditions for the existence of a $T-$ACIM}
For $k=1,2,$ we assume\\
(A) \quad $\sum\limits_{i=1}\limits^{l}\frac{p_{k}(T_{k,i}^{-1}(x))}{T^{'}_{k}(T_{k,i}^{-1}(x))}, 1\leq l\leq 2,$ is decreasing;\\
(B) \quad $\inf\limits_{x\in I}p_{k}(x)\geq \delta >0.$\\

\begin{theorem}\label{thm3.1}\text{Under assumptions (A) and (B)}\\
(i) The random map $T$ admits a unique ACIM $\mu, d\mu=\rho dm.$\\
(ii) The invariant density $\rho$ is uniformly bounded below.
\end{theorem}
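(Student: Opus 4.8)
The plan is to exploit the cone $\mathcal{C}_A$ as a positively $\mathcal{L}_T$-invariant set and to obtain the invariant density as a fixed point. First I would verify that, for a suitable constant $A>0$, the transfer operator $\mathcal{L}_T$ maps $\mathcal{C}_A$ into itself. Using the explicit formula $(\mathcal{L}_T f)(x)=\sum_{k=1}^2\sum_{y\in T_k^{-1}(x)}\frac{(p_k f)(y)}{|T_k'(y)|}$, positivity of $\mathcal{L}_T f$ is immediate from property (ii), and preservation of integrals (property (iii)) gives $m(\mathcal{L}_T f)=m(f)$. The monotonicity of $\mathcal{L}_T f$ is where assumption (A) enters: writing $\mathcal{L}_T f$ as a sum over inverse branches weighted by $\frac{p_k(T_{k,i}^{-1}(x))}{T_k'(T_{k,i}^{-1}(x))}$, the decreasing hypothesis in (A) together with the decrease of $f$ and the convexity of the $T_k$ (which makes each inverse branch concave, so $T_{k,i}^{-1}$ is increasing with decreasing derivative) should force $\mathcal{L}_T f$ to be decreasing. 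Establishing this monotonicity cleanly is the first technical point.

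Next I would check the integral cone inequality $\int_0^x \mathcal{L}_T f\,dm\le A x^{1-\alpha} m(\mathcal{L}_T f)$ for an appropriate $A$. Since $m(\mathcal{L}_T f)=m(f)$, this reduces to controlling $\int_0^x \mathcal{L}_T f\,dm$. The natural approach is a change of variables: $\int_0^x \mathcal{L}_T f\,dm = \sum_k \int_{T_k^{-1}([0,x])} p_k f\,dm$, and the dominant contribution comes from the indifferent-fixed-point branch on $[0,\tfrac12)$ where $T_k(y)=y(1+2^\gamma y^\gamma)$ for $\gamma\in\{\alpha,\beta\}$. Near $0$ the preimage of $[0,x]$ under this branch has length $\sim x(1-c x^{\gamma}+\cdots)$, and since $\beta<\alpha$ the map $T_2$ is \emph{less} contracting at the origin, so the $x^{1-\alpha}$ bound dictated by $T_1$ is the binding one; I expect the worst-case estimate to be governed by $\alpha$, which is consistent with the choice of exponent in the cone. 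This estimate, exploiting $\tau(x)\ge x+Cx^{1+\alpha}$, is the heart of the invariance proof and is where the precise value of $A$ gets fixed.

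With $\mathcal{L}_T(\mathcal{C}_A)\subseteq\mathcal{C}_A$ in hand, existence follows by a standard argument: take any $f_0\in\mathcal{C}_A$ with $m(f_0)=1$ (e.g. a suitably normalized decreasing function) and form the Cesàro averages $\rho_n=\tfrac1n\sum_{j=0}^{n-1}\mathcal{L}_T^{\,j} f_0$. These lie in $\mathcal{C}_A$ (a convex set) and, crucially, the cone $\mathcal{C}_A$ is compact in $L^1$: functions in $\mathcal{C}_A$ are uniformly bounded away from the origin by the integral constraint and are monotone, so Helly's selection theorem yields an $L^1$-convergent subsequence. Any limit point $\rho\in\mathcal{C}_A$ satisfies $\mathcal{L}_T\rho=\rho$ by continuity, giving a $T$-invariant density by property (v). For part (ii), the cone structure itself delivers the lower bound: a nonnegative decreasing $\rho$ with $m(\rho)=1$ and $\int_0^x\rho\,dm\le Ax^{1-\alpha}$ cannot vanish on any right-neighborhood of $1$ without violating $m(\rho)=1$, and monotonicity then propagates a positive infimum; more directly, $\rho(1)=\inf\rho$, and one bounds $\rho(1)$ below using $1=m(\rho)\le \rho(0)$ combined with the fixed-point relation evaluated near $x=1$, where assumption (B) ($p_k\ge\delta>0$) guarantees both branches contribute.

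The main obstacle will be uniqueness. Cone invariance plus compactness gives existence but not uniqueness, since $\mathcal{L}_T$ need not contract the projective (Hilbert) metric on $\mathcal{C}_A$ in one step—the indifferent fixed point destroys uniform hyperbolicity and hence any naive spectral gap. The route I would take is to show that the only fixed points in $\mathcal{C}_A$ are scalar multiples of one another: suppose $\rho_1,\rho_2\in\mathcal{C}_A$ are both fixed with $m(\rho_i)=1$; consider $\rho_1-\rho_2$, which has zero integral, and argue that $\mathcal{L}_T$ acts as an $L^1$-contraction that is strict on the zero-average subspace intersected with the (differences of the) cone, using assumption (B) to guarantee that mass genuinely mixes between the two branches so that no nontrivial signed invariant density can persist. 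Making this strictness quantitative—rather than the merely non-expanding property (iv)—is the delicate step, and I anticipate it requires either an explicit estimate showing $\|\mathcal{L}_T(\rho_1-\rho_2)\|_1<\|\rho_1-\rho_2\|_1$ whenever the difference is nonzero, or an appeal to ergodicity of the associated skew product driven by the lower bound $\delta$ on the probabilities.
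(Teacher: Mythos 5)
Your cone-invariance and existence steps are sound and essentially the paper's (Lemma \ref{le3.3}, Proposition \ref{pro3.4}; the paper invokes Schauder--Tychonoff on the compact convex cone where you take Ces\`aro averages, an immaterial difference). The first genuine problem is part (ii). Your claim that a nonnegative decreasing $\rho$ with $m(\rho)=1$ satisfying the cone inequality ``cannot vanish on any right-neighborhood of $1$'' is false: $\rho=2\chi_{[0,1/2]}$ is decreasing, has integral one, satisfies $\int_0^x\rho\,dm\leq Ax^{1-\alpha}m(\rho)$ for every $A\geq 2$ (in particular for $A\geq\frac{4}{1-\alpha}$), and vanishes on $(\tfrac12,1]$; the cone inequality is an \emph{upper} bound on mass near $0$ and gives no information near $1$. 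Your fallback --- evaluate the fixed-point relation at $x=1$ --- also does not close: a single application only expresses $\rho(1)$ through $\rho(\tfrac12)$ (and through $g_k^{-1}(1)$ when that preimage exists), and these values could a priori vanish. What is needed, and what the paper's Proposition \ref{pro3.8} supplies, is an $N$-step iteration: fix $\sigma$ with $A\sigma^{1-\alpha}=\tfrac12$, so that the cone inequality forces $f\geq\frac{1}{2(1-\sigma)}$ on $(0,\sigma]$ for any normalized $f\in\mathcal{C}_A$; then choose $N$ so large that the backward orbit of $1$ along the left (indifferent) branches enters $[0,\sigma]$, and push the lower bound forward through $\mathcal{L}_T^N$ using condition (B) and $\max_{[0,1/2]}T_k'\leq 2+\alpha$, obtaining $\mathcal{L}_T^nf(1)\geq\gamma>0$ for all $n\geq N$, uniformly over normalized $f\in\mathcal{C}_A$. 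This quantitative, finitely-iterated transfer of the lower bound from a neighborhood of $0$ to the point $1$ is the missing idea.

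The second and larger gap is uniqueness, which you explicitly leave as an unproved plan. The strict-contraction strategy cannot work as stated: for a zero-mean $h$, one has $\|\mathcal{L}_Th\|_1=\|h\|_1$ exactly when $\mathcal{L}_Th^{+}$ and $\mathcal{L}_Th^{-}$ have essentially disjoint supports, and nothing in (A), (B) rules that out for a single step; since intermittent maps have no spectral gap, no uniform $n$-step contraction rate is available either, so strictness must come from an \emph{irreducibility} statement (eventual overlap of the supports of $\mathcal{L}_T^nh^{\pm}$), which is precisely what you have not proved. Moreover, even completed, your argument would only show uniqueness of fixed points \emph{inside the cone}, whereas the theorem asserts uniqueness among all ACIMs. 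The paper's route uses an input your proposal never touches: each deterministic map $T_k$ admits a unique ACIM with support all of $I$ (the Appendix lemma), and Lemma \ref{le3.7} then shows, using condition (B) to get $\mu(T_kB)\geq\delta\,\mu(B)$ for sets $B$ of positive measure, that \emph{every} ACIM of the random map has full support; combined with Proposition \ref{pro3.8} (a fixed density in the cone is $\geq\gamma$, so its measure is equivalent to $m$) and the ergodic decomposition, this makes two mutually singular ACIMs impossible. Finally, your alternative suggestion --- appeal to ergodicity of the associated skew product --- is circular relative to the paper: in Section 5 the uniqueness of the skew product's ACIM is \emph{deduced from} the uniqueness for the random map, not the other way around, and the skew product is itself non-uniformly expanding, so its ergodicity is no easier than the statement at hand.
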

We first prove some technical lemmas. The proof of the Theorem \ref{thm3.1} is at the end of this section.

\begin{lemma}\label{le3.2}
Let $f\in \mathcal{C}_{A}.$  Then, for $x\in(0,1],$\\
(i)\quad$f(x)\leq Ax^{-\alpha}m(f);$\\
(ii)\quad $f(x)\leq \frac{1}{x}m(f),$ and in particular, $f(x)|_{x\in[\frac{1}{2},z_{\ast})}\leq 2m(f);$\\
(iii)\quad$y_{1}\geq\frac{x}{2}, y_{2}\geq\frac{x}{2} $ and $x\geq y_{\ast};$\\
(iv)\quad $(1-x)^{1-\alpha}\leq 1-(1-\alpha)x;$\\
(v)\quad$x^{1-\alpha}-y_{\ast}^{1-\alpha}\geq\frac{1-\alpha}{2}x.$
\end{lemma}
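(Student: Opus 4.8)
The plan is to treat the five assertions in order, leveraging the defining properties of $\mathcal{C}_A$ (nonnegativity, monotonicity, and the integral bound) together with the explicit form of the left, parabolic branches of $T_1$ and $T_2$.

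For (i) and (ii) I would exploit that $f$ is decreasing. Since $f(t)\ge f(x)$ for all $t\le x$, integrating gives $xf(x)\le \int_0^x f\,dm$. Combining with the cone inequality $\int_0^x f\,dm\le Ax^{1-\alpha}m(f)$ yields $f(x)\le Ax^{-\alpha}m(f)$, which is (i). For (ii) I would instead bound $\int_0^x f\,dm\le \int_0^1 f\,dm=m(f)$ using $f\ge 0$, so that $xf(x)\le m(f)$ and hence $f(x)\le x^{-1}m(f)$; restricting to $x\in[\tfrac12,z_\ast)\subseteq[\tfrac12,1)$ and using $x^{-1}\le 2$ gives the stated bound $f(x)\le 2m(f)$.

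For (iii), I would use that $y_1,y_2\in[0,\tfrac12)$ are the preimages of $x$ under the parabolic branches, so $x=y_1(1+2^\alpha y_1^\alpha)=y_2(1+2^\beta y_2^\beta)$. Writing $2^\alpha y_1^\alpha=(2y_1)^\alpha$ and noting $2y_1<1$ gives $0<(2y_1)^\alpha<1$, hence $y_1<x<2y_1$, i.e. $y_1>\tfrac{x}{2}$; the same argument with $\beta$ gives $y_2>\tfrac{x}{2}$. Since $1+2^\alpha y_1^\alpha\ge 1$ we also get $x\ge y_1$ and likewise $x\ge y_2$, so $x\ge y_\ast=\max\{y_1,y_2\}$. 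For (iv), I would apply Bernoulli's inequality $(1+t)^r\le 1+rt$ with $r=1-\alpha\in(0,1)$ and $t=-x\in[-1,0]$; alternatively, set $h(x)=1-(1-\alpha)x-(1-x)^{1-\alpha}$, note $h(0)=0$ and $h'(x)=(1-\alpha)[(1-x)^{-\alpha}-1]\ge 0$ on $[0,1)$, so $h\ge 0$.

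The substantive step is (v), which is where I expect the main work. First I would record that the parabolic branch of $T_1$ lies below that of $T_2$ on $(0,\tfrac12)$: since $\beta<\alpha$ and $2y<1$ there, $(2y)^\alpha<(2y)^\beta$, so $T_1(y)<T_2(y)$; as both branches are increasing, solving $T_1(y_1)=T_2(y_2)=x$ forces $y_1\ge y_2$, whence $y_\ast=y_1$ and $x-y_\ast=x-y_1=2^\alpha y_1^{1+\alpha}$. Next I would linearize the difference using (iv): with $1-u=y_\ast/x\in[0,1]$ (valid by (iii)), part (iv) gives $(y_\ast/x)^{1-\alpha}\le 1-(1-\alpha)(x-y_\ast)/x$, which rearranges to $x^{1-\alpha}-y_\ast^{1-\alpha}\ge(1-\alpha)x^{-\alpha}(x-y_\ast)$. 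Finally I would bound $x-y_\ast$ from below: using $y_1\ge\tfrac{x}{2}$ from (iii), $x-y_\ast=2^\alpha y_1^{1+\alpha}\ge 2^\alpha(\tfrac{x}{2})^{1+\alpha}=\tfrac12 x^{1+\alpha}$. Substituting gives $x^{1-\alpha}-y_\ast^{1-\alpha}\ge(1-\alpha)x^{-\alpha}\cdot\tfrac12 x^{1+\alpha}=\tfrac{1-\alpha}{2}x$, as required. The delicate points are the identification $y_\ast=y_1$, so that the sharper exponent $\alpha$ rather than $\beta$ controls $x-y_\ast$, and keeping the Bernoulli estimate oriented in the correct direction.
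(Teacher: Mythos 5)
Your proof is correct. Parts (i)--(iv) coincide with the paper's own argument: the same monotonicity trick $xf(x)\le\int_0^x f\,dm$ combined with the cone inequality for (i), the bound $\int_0^x f\,dm\le m(f)$ for (ii), the same elementary branch estimates $y_k\le x\le 2y_k$ for (iii), and the same concavity/Bernoulli inequality for (iv) (the paper verifies it by the derivative argument you also sketch). The genuine difference is in (v). The paper never identifies which preimage realizes $y_\ast$; it runs two parallel cases, $y_\ast=y_1$ and $y_\ast=y_2$, and in the second case compensates for the mismatched exponents by inserting the factor $(2y_2)^{\beta-\alpha}\ge 1$, which is legitimate because $\beta<\alpha$ and $2y_2\le 1$. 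You instead observe that the left branch of $T_1$ lies below that of $T_2$ (since $(2y)^\alpha<(2y)^\beta$ for $2y<1$), so that $y_1\ge y_2$ and $y_\ast=y_1$ always, collapsing the proof to a single case; your linearization $x^{1-\alpha}-y_\ast^{1-\alpha}\ge(1-\alpha)x^{-\alpha}(x-y_\ast)$ is exactly the paper's application of (iv) with $\zeta=(x-y_\ast)/x$, and your lower bound $x-y_\ast=2^\alpha y_1^{1+\alpha}\ge \tfrac12 x^{1+\alpha}$ is an equivalent reorganization of the paper's chain of estimates. Your route is shorter and in fact shows that the paper's second case is vacuous for these particular maps (it can occur only when $y_1=y_2$); what the paper's two-case argument buys is that it never uses any ordering of the branches, so it carries over unchanged to the more general class of intermittent maps described in the paper's footnote (and to more than two maps), where no such ordering need hold.
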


\begin{proof}
(i) We have
$$xf(x)=\int\limits_{0}^{x}f(x)dm(\xi)\leq\int\limits_{0}^{x}f(\xi)dm(\xi)
\leq Ax^{1-\alpha}m(f).$$

(ii) By $f(x)\geq 0$ and decreasing, we have
$$xf(x)=\int\limits_{0}^{x}f(x)dm(\xi)\leq \int\limits_{0}^{x}f(\xi)dm(\xi)
\leq \int\limits_{0}^{x}f(\xi)dm(\xi)+\int\limits_{x}^{1}f(\xi)dm(\xi)=m(f).$$
So, $f(x)\leq \frac{1}{x}m(f)$ and in particular $f(x)\leq 2m(f),$ when $x\in[\frac{1}{2},z_{\ast}).$
\vskip 5pt

(iii) For $y_{1},y_{2}\leq \frac{1}{2}, 0<\beta<\alpha<1,$ we have
$$x=T_{1}(y_{1})=y_{1}(1+2^{\alpha}y_{1}^{\alpha})\leq2y_{1} \text{ and }
x=T_{2}(y_{2})=y_{2}(1+2^{\alpha}y_{2}^{\beta})\leq2y_{2}.$$
Also,
$$x=T_{1}(y_{1})=y_{1}(1+2^{\alpha}y_{1}^{\alpha})\geq y_{1} \text{ and }
x=T_{2}(y_{2})=y_{2}(1+2^{\alpha}y_{2}^{\beta})\geq y_{2}.$$
Therefore, $y_{1}\geq\frac{x}{2}, y_{2}\geq\frac{x}{2}$ and
$x\geq y_{\ast}.$
\vskip 5pt

(iv) Set $$g(x)=(1-x)^{1-\alpha}- [1-(1-\alpha)x],$$
then $g(0)=1-1=0$ and for $x\in[0,1],$
 \begin{eqnarray*}
 g'(x)=-(1-\alpha)(1-x)^{-\alpha}+(1-\alpha)=(1-\alpha)[1-\frac{1}{(1-x)^{\alpha}}]\leq 0.
 \end{eqnarray*}
Therefore, $g(x)\leq0, x\in(0,1],$  that is
$(1-x)^{1-\alpha}\leq 1-(1-\alpha)x.$

(v) First write,
$$x^{1-\alpha}-y_{\ast}^{1-\alpha}=x^{1-\alpha}[1-(\frac{y_{\ast}}{x})^{1-\alpha}]=
x^{1-\alpha}[1-(1-\frac{x-y_{\ast}}{x})^{1-\alpha}].$$
Let $\zeta=\frac{x-y_{\ast}}{x}.$

In case $y_{\ast}=y_{1},$
$$x=T_{1}(y_{1})=y_{1}(1+2^{\alpha}y_{1}^{\alpha})>y_{1}>0,\quad x\leq2y_{1}\quad\text {and }\zeta=\frac{x-y_{1}}{x}\in (0,1].$$
Thus,
 \begin{eqnarray*}
 x^{1-\alpha}-y_{\ast}^{1-\alpha}&=&x^{1-\alpha}[1-(1-\zeta)^{1-\alpha}]\\
 &\geq&x^{1-\alpha}[1-(1-(1-\alpha)\zeta)]\\
 &=&x^{1-\alpha}(1-\alpha)\frac{x-y_{1}}{x}\\
 &=&x^{-\alpha}(1-\alpha)(T_{1}(y_{1})-y_{1})\\
 &=&x^{-\alpha}(1-\alpha)(2^{\alpha}y_{1}^{\alpha+1})\\
 &\geq&(2y_{1})^{-\alpha}(1-\alpha)(2^{\alpha}y_{1}^{\alpha+1})\\
 &=&(1-\alpha)y_{1}\\
 &\geq&\frac{(1-\alpha)}{2}x.
 \end{eqnarray*}

In case $y_{\ast}=y_{2},$
$$x=T_{2}(y_{2})=y_{2}(1+2^{\beta}y_{2}^{\beta})>y_{2}>0,\quad  x\leq2y_{2}\quad\text{and }\zeta=\frac{x-y_{2}}{x}\in (0,1].$$
We have
 \begin{eqnarray*}
 x^{1-\alpha}-y_{\ast}^{1-\alpha}&=&x^{1-\alpha}[1-(1-\zeta)^{1-\alpha}]\\
 &\geq&x^{1-\alpha}[1-(1-(1-\alpha)\zeta)]\\
 &=&x^{1-\alpha}(1-\alpha)\frac{x-y_{2}}{x}\\
 &=&x^{-\alpha}(1-\alpha)(T_{2}(y_{2})-y_{1})\\
 &=&x^{-\alpha}(1-\alpha)(2^{\beta}y_{2}^{\beta+1})\\
 &\geq&(2y_{2})^{-\alpha}(1-\alpha)(2^{\beta}y_{2}^{\beta+1})\\
 &=&(1-\alpha)y_{2}(2y_{2})^{\beta-\alpha}.
 \end{eqnarray*}
Since $0<\beta<\alpha< 1$ and $0\leq2y_{2}\leq1$, we get
$(2y_{2})^{\beta-\alpha}\geq 1.$\\
Thus,
 \begin{eqnarray*}
 x^{1-\alpha}-y_{\ast}^{1-\alpha}\geq(1-\alpha)y_{2}\geq(1-\alpha)\frac{x}{2}.
 \end{eqnarray*}
\end{proof}

\begin{lemma}\label{le3.3}
Let $f\geq 0$ be a decreasing function. Then $\mathcal{L}_{T}f$ is also decreasing.
\end{lemma}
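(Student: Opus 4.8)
The plan is to compare the two inverse-branch contributions of each map through a summation-by-parts (Abel) argument, exploiting the fact that hypothesis (A) controls the \emph{partial sums} of the branch weights rather than the individual weights. I would start from the formula for $\mathcal{L}_{T}$, writing
$$(\mathcal{L}_{T}f)(x)=\sum_{k=1}^{2}\sum_{i=1}^{2}w_{k,i}(x)\,f\bigl(T_{k,i}^{-1}(x)\bigr),\qquad w_{k,i}(x):=\frac{p_{k}(T_{k,i}^{-1}(x))}{T_{k}'(T_{k,i}^{-1}(x))}\ge 0.$$
Since every branch of each $T_{k}$ is increasing, each inverse branch $x\mapsto T_{k,i}^{-1}(x)$ is increasing; hence, with $f\ge 0$ decreasing, the map $x\mapsto f(T_{k,i}^{-1}(x))$ is nonnegative and decreasing, and $f(y_{k})\ge f(z_{k})$ because $y_{k}=T_{k,1}^{-1}(x)\le \tfrac12\le T_{k,2}^{-1}(x)=z_{k}$.

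Next I would fix $x_{1}<x_{2}$ and, for each $k$, abbreviate $a_{i}=w_{k,i}(x_{1})$, $c_{i}=w_{k,i}(x_{2})$, $u_{i}=f(T_{k,i}^{-1}(x_{1}))$ and $v_{i}=f(T_{k,i}^{-1}(x_{2}))$. Hypothesis (A) with $l=1$ and $l=2$ supplies $a_{1}\ge c_{1}\ge 0$ and $a_{1}+a_{2}\ge c_{1}+c_{2}\ge 0$, while the observations above give $v_{1}\ge v_{2}\ge 0$ and $u_{i}\ge v_{i}\ge 0$. The core of the proof is then the elementary chain
$$c_{1}v_{1}+c_{2}v_{2}=c_{1}(v_{1}-v_{2})+(c_{1}+c_{2})v_{2}\le a_{1}(v_{1}-v_{2})+(a_{1}+a_{2})v_{2}=a_{1}v_{1}+a_{2}v_{2}\le a_{1}u_{1}+a_{2}u_{2},$$
in which the first inequality uses $v_{1}-v_{2}\ge 0$ and $v_{2}\ge 0$ together with $c_{1}\le a_{1}$ and $c_{1}+c_{2}\le a_{1}+a_{2}$, and the last uses $a_{i}\ge 0$ and $v_{i}\le u_{i}$. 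This is exactly $\sum_{i}w_{k,i}(x_{1})f(T_{k,i}^{-1}(x_{1}))\ge \sum_{i}w_{k,i}(x_{2})f(T_{k,i}^{-1}(x_{2}))$ for each $k$; summing over $k$ gives $(\mathcal{L}_{T}f)(x_{1})\ge(\mathcal{L}_{T}f)(x_{2})$, so $\mathcal{L}_{T}f$ is decreasing.

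The hard part, and the point I would be most careful about, is that one cannot argue termwise: it is \emph{not} true in general that each summand $w_{k,i}(x)f(T_{k,i}^{-1}(x))$ is decreasing, because (A) only forces $w_{k,1}$ and $w_{k,1}+w_{k,2}$ to be decreasing and says nothing about $w_{k,2}$ alone. The Abel regrouping is what saves the day: it pairs the larger partial sum $a_{1}+a_{2}$ with the smaller value $v_{2}$ and isolates the nonnegative increment $v_{1}-v_{2}$ against the single weight $a_{1}$, which is precisely the combination that (A) is designed to dominate. By contrast, the geometric inputs — monotonicity of the inverse branches and the ordering $y_{k}\le z_{k}$ — are immediate from the setup and carry no real difficulty.
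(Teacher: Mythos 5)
Your proof is correct. Note, however, that the paper does not actually prove this lemma: its entire ``proof'' is a citation to Lemma 3.1 of the reference [BG2] (Bahsoun--G\'ora), with a footnote remarking that only assumption (A) is needed. So your argument is not an alternative to something in the paper — it is a self-contained reconstruction of exactly the content the paper outsources, and it follows the standard route for such statements: the Abel/summation-by-parts regrouping is precisely what turns the ``partial sums of branch weights are decreasing'' hypothesis (A) into monotonicity of $\mathcal{L}_{T}f$, together with the two geometric facts that inverse branches are increasing and that $f(T_{k,1}^{-1}(x))\ge f(T_{k,2}^{-1}(x))$ because the first preimage lies left of $\tfrac12$ and the second right of it. Your chain of inequalities checks out, and your closing remark — that a termwise argument fails because (A) says nothing about $w_{k,2}$ alone — identifies the genuine point of the lemma. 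One small detail worth making explicit: since $g_{k}$ need not be onto, points $x>g_{k}(1)$ have no second preimage, so the term $w_{k,2}(x)f(T_{k,2}^{-1}(x))$ must be interpreted as $0$ there (this is also the implicit convention in condition (A)). Because the second branch covers the interval $[0,g_{k}(1)]$, the set of points with two preimages is downward closed, so in your notation the only degenerate case is $c_{2}=0$ with $a_{2}\ge 0$; setting $v_{2}:=0$ in that case, your chain of inequalities goes through unchanged.
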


\begin{proof}
See Lemma 3.1 of \cite{BG2}.\footnote{ Note that this Lemma only requires assumption (A) to hold.}
\end{proof}

\begin{proposition}\label{pro3.4}
For $A\geq \frac{4}{1-\alpha}$ the cone $\mathcal{C}_{A}$ is invariant under the action of the operator $\mathcal{L}_{T}.$
\end{proposition}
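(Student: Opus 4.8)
The plan is to verify the three defining properties of the cone $\mathcal{C}_A$ for the function $g:=\mathcal{L}_T f$ whenever $f\in\mathcal{C}_A$: nonnegativity, monotonicity, and the integral bound $\int_0^x g\,dm\le Ax^{1-\alpha}m(g)$. The first two come for free: positivity of $g$ follows from property (ii) of $\mathcal{L}_T$ together with $p_k\ge 0$, and the fact that $g$ is decreasing is exactly the content of Lemma \ref{le3.3}. Moreover, by preservation of integrals (property (iii)) we have $m(g)=m(f)$, so the target inequality reduces to $\int_0^x\mathcal{L}_T f\,dm\le Ax^{1-\alpha}m(f)$. The entire difficulty is concentrated in this last estimate.

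First I would rewrite the left-hand side using the definition of $E_T$ in \eqref{2.2} with $A=[0,x]$, namely $\int_0^x\mathcal{L}_T f\,dm=\int_{T_1^{-1}[0,x]}p_1 f\,dm+\int_{T_2^{-1}[0,x]}p_2 f\,dm$, and then split each preimage into its two monotone branches, $T_k^{-1}[0,x]=[0,y_k]\cup[\tfrac{1}{2},z_k]$. This decomposes the integral into a near-$0$ part $\sum_k\int_0^{y_k}p_k f\,dm$ and a near-$\tfrac{1}{2}$ part $\sum_k\int_{1/2}^{z_k}p_k f\,dm$, which I would estimate separately. For the near-$0$ part, the key move is to exploit $p_1+p_2=1$: since $y_k\le y_\ast$ and $p_k f\ge 0$, enlarging each domain to $[0,y_\ast]$ gives $\sum_k\int_0^{y_k}p_k f\,dm\le\int_0^{y_\ast}(p_1+p_2)f\,dm=\int_0^{y_\ast}f\,dm$, and the cone condition on $f$ bounds this by $Ay_\ast^{1-\alpha}m(f)$.

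The same domain-enlargement yields $\sum_k\int_{1/2}^{z_k}p_k f\,dm\le\int_{1/2}^{z_\ast}f\,dm$ for the near-$\tfrac{1}{2}$ part; here I would invoke Lemma \ref{le3.2}(ii), which gives $f\le 2m(f)$ on $[\tfrac{1}{2},z_\ast)$, together with the bound $z_\ast-\tfrac{1}{2}<x$ obtained from the mean value theorem applied to each $g_k$ (using $g_k(\tfrac{1}{2})=0$ and $g_k'>1$), to get $\int_{1/2}^{z_\ast}f\,dm\le 2m(f)(z_\ast-\tfrac{1}{2})\le 2x\,m(f)$. Combining the two estimates gives $\int_0^x\mathcal{L}_T f\,dm\le(Ay_\ast^{1-\alpha}+2x)m(f)$, so it remains to check $Ay_\ast^{1-\alpha}+2x\le Ax^{1-\alpha}$, that is $A(x^{1-\alpha}-y_\ast^{1-\alpha})\ge 2x$.

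At this point I would apply Lemma \ref{le3.2}(v), which gives $x^{1-\alpha}-y_\ast^{1-\alpha}\ge\tfrac{1-\alpha}{2}x$; hence the inequality holds as soon as $A\tfrac{1-\alpha}{2}\ge 2$, i.e.\ precisely when $A\ge\tfrac{4}{1-\alpha}$, matching the stated hypothesis. The main obstacle, and the step that pins down the constant, is handling the two maps simultaneously without losing a spurious factor of $2$: the careful use of $p_1+p_2=1$ on the overlapping part of the domains (rather than bounding each $p_k\le 1$ in isolation) is exactly what keeps the threshold at $\tfrac{4}{1-\alpha}$ instead of $\tfrac{8}{1-\alpha}$. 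Finally, the degenerate case $m(f)=0$, which forces $f=0$ almost everywhere and $\mathcal{L}_T f=0$, should be disposed of separately, since there every assertion is trivially true.
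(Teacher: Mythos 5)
Your proof is correct and follows essentially the same route as the paper's: the same branch decomposition of $T_{k}^{-1}[0,x]$ into $[0,y_{k}]\cup[\frac{1}{2},z_{k}]$, the same use of $p_{1}(x)+p_{2}(x)=1$ after enlarging the domains to $[0,y_{\ast}]$ and $[\frac{1}{2},z_{\ast}]$, and the same appeal to Lemma \ref{le3.2}(ii) and (v) to arrive at the threshold $A\geq\frac{4}{1-\alpha}$. The only notable differences are cosmetic: your mean-value-theorem bound $z_{\ast}-\frac{1}{2}<x$ treats the possibly non-onto branches $g_{k}$ uniformly, whereas the paper splits into two cases according to the number of preimages of $x$; and you correctly invoke part (v) of Lemma \ref{le3.2} where the paper's text miscites part (iv).
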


\begin{proof}
By Lemma \ref{le3.3}, for $f\in\mathcal{C}_{A}$ we know that $\mathcal{L}_{T}f$ is decreasing . Also,
$\mathcal{L}_{T}f\geq 0$ and $m(\mathcal{L}_{T}f)=m(f).$ Therefore we only need to prove that
$$\int\limits_{0}\limits^{x}\mathcal{L}_{T}fdm\leq Ax^{1-\alpha}m(\mathcal{L}_{T})=Ax^{1-\alpha}m(f),$$
when $A\geq A_{\ast}=\frac{4}{1-\alpha}.$
We have
\begin{eqnarray*}
\int\limits_{0}\limits^{x}\mathcal{L}_{T}fdm&=&\int\limits_{0}\limits^{x}P_{T_{1}}(p_{1}f)+P_{T_{2}}(p_{2}f)dm
=\int\limits_{T_{1}^{-1}[0,x]}(p_{1}f)dm+\int\limits_{T_{2}^{-1}[0,x]}(p_{2}f)dm\\
&=&(\int\limits_{0}\limits^{y_{1}}+\int\limits_{\frac{1}{2}}\limits^{z_{1}})(p_{1}f)dm +
    (\int\limits_{0}\limits^{y_{2}}+\int\limits_{\frac{1}{2}}\limits^{z_{2}})(p_{2}f)dm\\
&\leq&(\int\limits_{0}\limits^{y_{\ast}}+\int\limits_{\frac{1}{2}}\limits^{z_{\ast}})(p_{1}f)dm +
    (\int\limits_{0}\limits^{y_{\ast}}+\int\limits_{\frac{1}{2}}\limits^{z_{\ast}})(p_{2}f)dm\\
&=&\int\limits_{0}\limits^{y_{\ast}}(p_{1}+p_{2})fdm+\int\limits_{\frac{1}{2}}\limits^{z_{\ast}}(p_{1}+p_{2})fdm
\leq Ay^{1-\alpha}_{\ast}m(f)+\int\limits_{\frac{1}{2}}\limits^{z_{\ast}}fdm,
\end{eqnarray*}
where $y_{\ast}=\max\{y_{1},y_{2}\}\in [0,\frac{1}{2}],\quad z_{\ast}=\max\{z_{1},z_{2}\}\in [\frac{1}{2},1].$
Since our transformations $T_{k}(x)$ may not be piecewise onto, there are two cases to consider.

In the case 1, $x$ has only one pre-image.
By the Lemma \ref{le3.2} (iii), we get $x\geq y_{\ast}.$ So, $y^{1-\alpha}_{\ast}\leq x^{1-\alpha},$ with $1-\alpha>0.$
Therefore, $\int\limits_{0}\limits^{x}\mathcal{L}_{T}fdm\leq Ay^{1-\alpha}_{\ast}$ for $A>0.$

In the case 2, $x$ has two preimages. From Lemma \ref{le3.2}, we have $f(x)\leq 2m(f), x\in [\frac{1}{2},z_{\ast}].$
Then,

$$\int\limits_{\frac{1}{2}}\limits^{z_{\ast}}fdm\leq \int\limits_{\frac{1}{2}}\limits^{z_{\ast}}2m(f)dm=2(z_{\ast}-\frac{1}{2})m(f).$$
Moreover, we have $g'_{1}(x)>1, g'_{2}(x)>1,$ then
$x=m[0,x]=m\circ T_{k}[\frac{1}{2}, z_{k}]\geq z_{k}-\frac{1}{2}, k=1,2$ i.e.
$x> z_{\ast}- \frac{1}{2}.$ So,
$$\int\limits_{\frac{1}{2}}\limits^{z_{\ast}}fdm< 2xm(f).$$
By the result of Lemma \ref{le3.2} (iv), we obtain that $ x\leq\frac{2}{1-\alpha}(x^{1-\alpha}-y_{\ast}^{1-\alpha}).$
Then, for $A\geq A_{\ast}=\frac{4}{1-\alpha},$
\begin{eqnarray*}
\int\limits_{0}\limits^{x}\mathcal{L}_{T}fdm
< Ay^{1-\alpha}_{\ast}m(f)+\frac{4}{1-\alpha}(x^{1-\alpha}-y_{\ast}^{1-\alpha})m(f)\leq Ax^{1-\alpha}m(f).
\end{eqnarray*}
Therefore, $\mathcal{L}_{T}f\in \mathcal{C}_{A},$ for $f\in\mathcal{C}_{A}$ and $A\geq \frac{4}{1-\alpha}.$
\end{proof}

\begin{remark}
Obviously, if $f\in\mathcal{C}_{A}$ and $A\geq \frac{4}{1-\alpha},$ then
$\mathcal{L}^{n}_{T}f\in \mathcal{C}_{A}, n\geq 1.$
\end{remark}

\begin{remark}
Since $\mathcal{C}_{A}$ is compact and convex, operator $\mathcal{L}_{T}$ has a fixed point
$f_{\ast}\in\mathcal{C}_{A}$ by Proposition \ref{pro3.4} and the Schauder-Tychonoff fixed point theorem of \cite{DS}. Thus,
random map $T$ admits an ACIM .
\end{remark}


Let $\mu$ be an ACIM for random map $T.$ Each of the maps $T_{k}$ admits a unique ACIM (See Appendix).
Let $\nu_{1}$ and $\nu_{2}$ be the unique ACIM for
$T_{1}$ and $T_{2}$ respectively.
Let $A_{k}=\textrm{supp}(\nu_{k})$ and $\mathcal{U}_{k}=\bigcup\limits_{j=0}\limits^{\infty}T^{-j}_{k}A_{k}$ be its basin.
For $k= 1, 2, $ we have $A_{k}=\mathcal{U}_{k}=I$ (see Appendix).

\begin{lemma}\label{le3.7}
For $k=1,2, I=A_{k}\subseteq\textrm{supp}(\mu).$
\end{lemma}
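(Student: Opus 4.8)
The plan is to prove the stronger statement that the density $\rho=d\mu/dm$ is strictly positive almost everywhere, which immediately gives $\mathrm{supp}(\mu)=I$; since we are told $A_k=I$, this yields $A_k=I\subseteq\mathrm{supp}(\mu)$. Because $\mathrm{supp}(\mu)\subseteq I$ trivially, everything reduces to proving $m(\{\rho=0\})=0$. I emphasize that I would run this argument for an \emph{arbitrary} ACIM $\mu$ (not just the cone fixed point $f_\ast$), since this lemma is meant to feed into the uniqueness part of Theorem \ref{thm3.1}.

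First I would record an invariance property of the positivity set $E=\{x:\rho(x)>0\}$. Writing the fixed point equation $\rho=\mathcal{L}_T\rho=P_{T_1}(p_1\rho)+P_{T_2}(p_2\rho)$ and discarding one of the two nonnegative summands gives the a.e.\ bound $\rho\geq P_{T_k}(p_k\rho)$ for each $k$. Since $\int_{E^c}\rho\,dm=0$, integrating this bound over $E^c$ and using the defining relation $\int_{E^c}P_{T_k}(p_k\rho)\,dm=\int_{T_k^{-1}(E^c)}p_k\rho\,dm$ forces $\int_{T_k^{-1}(E^c)}p_k\rho\,dm=0$. Assumption (B), i.e.\ $p_k\geq\delta>0$, then gives $\mu(T_k^{-1}(E^c))=0$, that is $E\subseteq T_k^{-1}(E)$ modulo $m$-null sets for $k=1,2$. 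In words, $E$ is forward invariant under each branch map.

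Next I would upgrade this one-sided invariance to genuine invariance using the deterministic factor maps $T_k$. Let $\nu_k$ be the unique ACIM of $T_k$; its density is decreasing with full support $A_k=I$ (Appendix), hence positive a.e., so $\nu_k\sim m$. From $E\subseteq T_k^{-1}(E)$ mod null and the $T_k$-invariance $\nu_k(T_k^{-1}(E))=\nu_k(E)$ we get $\nu_k(T_k^{-1}(E)\setminus E)=0$, so $T_k^{-1}(E)=E$ mod $\nu_k$-null. Since $\nu_k$ is the \emph{unique} ACIM of $T_k$ it is ergodic, whence $\nu_k(E)\in\{0,1\}$. As $\rho$ is a density we have $m(E)>0$, and $\nu_k\sim m$ gives $\nu_k(E)>0$; therefore $\nu_k(E)=1$ and $m(E^c)=0$. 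This proves $\rho>0$ a.e.\ and hence $A_k=I\subseteq\mathrm{supp}(\mu)$.

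The main obstacle is the passage from the almost-everywhere transfer-operator identity to a clean invariance statement for the set $E$: one must resist arguing with the pointwise inequality $\rho(T_k(y))\geq p_k(y)\rho(y)/|T_k'(y)|$, which is delicate because $\rho$ is only defined a.e.\ and $T_k$ is non-injective, and instead argue integrally via Perron--Frobenius duality as above. The two supporting facts I lean on, namely that $\nu_k$ is ergodic and equivalent to $m$, both follow from the uniqueness of the ACIM of $T_k$ together with the full-support and decreasing-density information established in the Appendix.
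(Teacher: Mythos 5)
Your proof is correct, but it follows a genuinely different route from the paper's. The paper argues entirely at the level of supports: it sets $B=I\cap\textrm{supp}(\mu)$, assumes $A_k\nsubseteq\textrm{supp}(\mu)$, deduces that the forward images $T_k^iB$ would then have to be $\mu$-null, and contradicts this by evaluating $\mu(T_kB)$ through the invariance identity $\mu(T_kB)=\int_{T_1^{-1}(T_kB)}p_1\,d\mu+\int_{T_2^{-1}(T_kB)}p_2\,d\mu\geq\delta\,\mu(B)>0$; in other words, condition (B) pushes positive measure forward, and no ergodic-theoretic input is needed beyond the Appendix. You instead work with the positivity set $E=\{\rho>0\}$: transfer-operator duality plus (B) gives $E\subseteq T_k^{-1}(E)$ modulo $m$-null sets, and then ergodicity of $\nu_k$ (correctly extracted from uniqueness of the deterministic ACIM) together with $\nu_k\sim m$ yields $m(E^c)=0$. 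Your conclusion is strictly stronger: every ACIM of the random map is equivalent to $m$, not merely of full support. That is in fact a more robust input to the uniqueness step of Theorem \ref{thm3.1}, since two mutually singular absolutely continuous measures can in principle both have full support, whereas they cannot both be equivalent to $m$. The price is heavier machinery: you need ergodicity of the $\nu_k$ and their equivalence to Lebesgue, neither of which the paper's own argument requires.

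One inaccuracy to repair: you say the decreasing-density property of $\nu_k$ is ``established in the Appendix,'' but the Appendix proves only the full-support statement. Monotonicity of the deterministic invariant density instead comes from the cone construction --- Lemma \ref{le3.3} and Proposition \ref{pro3.4} specialized to $p_1\equiv1$, $p_2\equiv0$ (their proofs never use condition (B)), or alternatively from \cite{LSV,RM}. With that citation fixed, your chain ``decreasing $+$ full support $\Rightarrow$ positive on $(0,1)$ $\Rightarrow$ $\nu_k\sim m$'' is sound, and the rest of the argument goes through.
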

\begin{proof}
Since $A_{k}=\mathcal{U}_{k}=I$ for $k= 1,2.$ Then $\mu(A_{k})=\mu(\mathcal{U}_{k})>0.$

Let $B= I\cap\textrm{supp}(\mu),$ then $B\neq \emptyset$ and $\mu(B)>0.$ Since $B$ is subset of $I=A_{k}$ and $A_{k}$ is an invariant set, then
$\bigcup\limits_{i=0}\limits^{\infty}T^{i}_{k}B\subseteq A_{k}.$

Assume $A_{k}\nsubseteq \textrm{supp}(\mu).$ Then $\mu(A_{k}\setminus B)=0.$
Also, $$\mu(A_{k}\setminus B)\geq \mu(\bigcup\limits_{i=0}\limits^{\infty}T^{i}_{k}B\setminus B)=\mu(\bigcup\limits_{i=1}\limits^{\infty}T^{i}_{k}B\setminus B)\geq \mu(T^{i}_{k}B), \quad i=1,2,...$$
So, in this case, $\mu(T^{i}_{k}B)\leq 0, i=1,2,... .$ However this leads to a contradiction because, by condition (B),
\begin{eqnarray*}
\mu(T_{1}B)&=&\int\limits_{T^{-1}_{1}(T_{1}B)}p_{1}d\mu+\int\limits_{T^{-1}_{2}(T_{1}B)}p_{2}d\mu\\
&\geq&\inf\limits_{x\in I}p_{1}(x)\mu(B)+\inf\limits_{x\in I}p_{2}(x)\mu(T^{-1}_{2}(T_{1}B))>0.
\end{eqnarray*}
\text{ and }
\begin{eqnarray*}
\mu(T_{2}B)&=&\int\limits_{T^{-1}_{1}(T_{2}B)}p_{1}d\mu+\int\limits_{T^{-1}_{2}(T_{2}B)}p_{2}d\mu\\
&\geq&\inf\limits_{x\in I}p_{1}(x)\mu(T^{-1}_{1}(T_{2}B))+\inf\limits_{x\in I}p_{2}(x)\mu(B)>0
\end{eqnarray*}
Therefore, $I=A_{k}\subseteq\textrm{supp}(\mu).$
\end{proof}

\begin{proposition}\label{pro3.8}
Let $A\geq A_{\ast}=\frac{4}{1-\alpha}$ and $f\in \mathcal{C}_{A}.$ There are
$\gamma>0, N\in \mathbb{Z}_{+}$ such that $\mathcal{L}^{n}_{T}f\geq\gamma m(f),$ for all $n\geq N,$ where
$\gamma$ and $N$ depend only on $A.$ In particular, if $\rho=\mathcal{L}_{T}\rho$ then $\mu=\rho m$ is equivalent to $m.$
\end{proposition}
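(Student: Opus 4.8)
The plan is to show that for every $g\in\mathcal{C}_A$ one has $\mathcal{L}_T^N g\geq \gamma\, m(g)$ a.e.\ on $[0,1]$, for a single $N$ and $\gamma>0$ depending only on $A$ (and the fixed data $\alpha,\delta$); the statement for all $n\geq N$ then follows by applying this to $g=\mathcal{L}_T^{\,n-N}f$, which again lies in $\mathcal{C}_A$ by Proposition \ref{pro3.4} and satisfies $m(g)=m(f)$. The whole argument rests on one elementary lower bound for a single application of $\mathcal{L}_T$: discarding every preimage except the one on the left branch of $T_1$, and using $T_1'(y)=1+2^{\alpha}(1+\alpha)y^{\alpha}\leq 2+\alpha<3$ on $[0,\tfrac12]$ together with $p_1\geq\delta$ (assumption (B)), we get, for $x\in[0,1)$ and any $h\geq 0$, that $\mathcal{L}_T h(x)\geq \frac{p_1(y_1)}{T_1'(y_1)}\,h(y_1)\geq \frac{\delta}{3}\,h(y_1)$, where $y_1=T_{1,1}^{-1}(x)\in[0,\tfrac12)$.

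First I would extract a base lower bound near $0$ directly from membership in the cone. Put $a_0=\tfrac12 A^{-1/(1-\alpha)}$, so that $A a_0^{1-\alpha}=2^{\alpha-1}<1$ (note $a_0<\tfrac12$ since $A>1$). Because $f$ is decreasing, $\int_{a_0}^1 f\,dm\leq(1-a_0)f(a_0)\leq f(a_0)$, while the cone condition gives $\int_{a_0}^1 f\,dm=m(f)-\int_0^{a_0}f\,dm\geq(1-Aa_0^{1-\alpha})m(f)=(1-2^{\alpha-1})m(f)$. Hence $f(a_0)\geq(1-2^{\alpha-1})m(f)$, and by monotonicity $f(x)\geq c_0\,m(f)$ for all $x\in[0,a_0]$, with $c_0=1-2^{\alpha-1}>0$ depending only on $A,\alpha$.

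Next I would propagate this bound rightward in uniformly many steps. Inductively assume $\mathcal{L}_T^n f\geq c_n$ on $[0,a_n]$. Applying the displayed inequality with $h=\mathcal{L}_T^n f$ and noting that $y_1=T_{1,1}^{-1}(x)\leq a_n$ iff $x\leq T_1(a_n)$, we obtain $\mathcal{L}_T^{n+1}f\geq \frac{\delta}{3}c_n$ on $[0,\min\{T_1(a_n),1\}]$; thus $c_{n+1}=\frac{\delta}{3}c_n$ and $a_{n+1}=\min\{T_1(a_n),1\}$. While $a_n<\tfrac12$ we have $a_{n+1}-a_n=2^{\alpha}a_n^{1+\alpha}\geq 2^{\alpha}a_0^{1+\alpha}=:\eta>0$, so $a_n$ reaches $\tfrac12$ after at most $N_0=\lceil(\tfrac12-a_0)/\eta\rceil$ steps; once $a_{N_0}\geq\tfrac12$, every $y_1\in[0,\tfrac12)$ lies in $[0,a_{N_0}]$, so one further step yields $\mathcal{L}_T^{N}f\geq(\tfrac{\delta}{3})^{N}c_0\,m(f)$ on all of $[0,1)$ with $N=N_0+1$. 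Setting $\gamma=(\tfrac{\delta}{3})^{N}(1-2^{\alpha-1})$ completes the uniform $N$-step bound, and the reduction in the first paragraph upgrades it to all $n\geq N$.

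Finally, the fixed point $\rho=\mathcal{L}_T\rho$ lies in $\mathcal{C}_A$, so $\rho=\mathcal{L}_T^{n}\rho\geq\gamma\, m(\rho)>0$ a.e.\ for $n\geq N$; since $\rho\in L^1$ is thereby bounded below by a positive constant, $\mu=\rho\,m$ and $m$ are mutually absolutely continuous. I expect the main obstacle to be the base step: one must squeeze a genuine pointwise lower bound near $0$ out of the cone condition, and the fact that the spreading time $N$ is finite and uniform hinges entirely on $a_0$ being bounded away from $0$ — itself a consequence of membership in $\mathcal{C}_A$. The exponentially small factor $(\delta/3)^N$ is harmless precisely because $N$ is a fixed number determined by $A$.
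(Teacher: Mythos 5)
Your proof is correct, and while it shares the paper's overall skeleton, its core mechanism is genuinely different. Both arguments extract a lower bound near $0$ from the cone condition (your $a_0$ plays exactly the role of the paper's $\sigma$, defined there by $A\sigma^{1-\alpha}=\tfrac12$), and both upgrade the $N$-step bound to all $n\geq N$ by writing $\mathcal{L}_T^{n}f=\mathcal{L}_T^{N}(\mathcal{L}_T^{\,n-N}f)$ and invoking the cone invariance of Proposition \ref{pro3.4}. The divergence is in the spreading step. The paper first uses Lemma \ref{le3.3} (hence assumption (A)) to reduce everything to bounding $\mathcal{L}_T^{n}f$ at the single point $x=1$, then expands $\mathcal{L}_{T^{N}}f(1)$ as a sum over all $2^{N}$ random words $\omega_N$, follows the backward orbit of $1$ down the left branches, and chooses $N$ so that these preimages enter $[0,\sigma]$. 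You instead propagate the bound forward on growing intervals: from $\mathcal{L}_T^{n}f\geq c_n$ on $[0,a_n]$ you get $\mathcal{L}_T^{n+1}f\geq\frac{\delta}{3}c_n$ on $[0,T_{1,1}(a_n)]$ by keeping only the left-branch preimage under $T_1$, and the uniform increment $2^{\alpha}a_0^{1+\alpha}$ makes the covering time explicit. This buys you three things: the spreading step needs only positivity and assumption (B) (no monotonicity of the iterates, hence no appeal to (A) there), no bookkeeping over random words, and fully explicit constants $N$ and $\gamma=(\delta/3)^{N}(1-2^{\alpha-1})$, whereas the paper's $\gamma$ is expressed through a sum over words. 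Two cosmetic points you should tighten: the recursion $a_{n+1}=\min\{T_1(a_n),1\}$ must be read with the left-branch formula $a_{n+1}=a_n+2^{\alpha}a_n^{1+\alpha}$ only, since for $a_n\geq\tfrac12$ the symbol $T_1(a_n)$ would mean $g_1(a_n)$, which can be small (e.g.\ $g_1(\tfrac12)=0$); and since $a_n$ may reach $\tfrac12$ strictly before step $N_0$, define $N=n^{*}+1$ with $n^{*}=\min\{n:a_n\geq\tfrac12\}\leq N_0$ (or note that once the bound holds on $[0,1)$ it keeps propagating there with one factor $\delta/3$ per step); both fixes are one line, and $N$, $\gamma$ still depend only on $A$, $\alpha$, $\delta$, as required.
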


\begin{proof}
First by Proposition \ref{pro3.4}, if $A\geq \frac{4}{1-\alpha}, f\in \mathcal{C}_{A},$ then $\mathcal{L}^{n}_{T}f\in \mathcal{C}_{A}.$ So, we have
$$\int\limits_{0}^{x}fdm\leq Ax^{1-\alpha}m(f), \int\limits_{0}^{x}\mathcal{L}^{n}_{T}fdm\leq Ax^{1-\alpha}m(\mathcal{L}^{n}_{T}f).$$
Without loss of generality, we suppose that $m(f)=1.$ Then $m(\mathcal{L}^{n}_{T}f)=m(f)=1.$
Therefore we only need to prove $\mathcal{L}^{n}_{T}f\geq\gamma.$ Fix a small number $0<\sigma<\frac{1}{2},$ such that $A\sigma^{1-\alpha}=\frac{1}{2}.$ Then,
$$\int\limits_{0}^{\sigma}fdm\leq A\sigma^{1-\alpha}=\frac{1}{2}  \text{ and } \int\limits_{\sigma}^{1}fdm=1-\int\limits_{0}^{\sigma}fdm\geq\frac{1}{2}.$$
When $x\in(0,\sigma),$ since $f(x)$ is a decreasing function, we have
$$f(x)\geq f(\sigma)=\frac{\int\limits_{\sigma}^{1}f(\sigma)dm}{1-\sigma}\geq \frac{\int\limits_{\sigma}^{1}f(x)dm}{1-\sigma}\geq \frac{1}{2(1-\sigma)}.$$

Moreover, $\mathcal{L}^{n}_{T}f(x)$ is decreasing. Then it is enough to show that  $\mathcal{L}^{n}_{T}f(1)$ is bounded below away from zero. By (vi) composition property of $\mathcal{L}_{T}$ we have $\mathcal{L}^{n}_{T}f(1)=\mathcal{L}_{T^{n}}f(1).$ We will show that $\mathcal{L}_{T^{n}}f(1)\geq\gamma>0.$
Set $\omega_{n}=\{k_{1},k_{2},...,k_{n}\in\{1,2\}^{n}\},  k_{i}\in\{1,2\}.$
Define $x_{n}=T_{k}^{-1}(x_{n-1})\cap[0,\frac{1}{2}], n\geq 1$ and $ x_{0}=1.$ Obviously, $\{x_{n}\}$ is a strictly decreasing sequence and it converges to 0. Since $\{x_{n}\}$ depends on $\omega_{n},$ we denote $\{x_{n,\omega_{n}}\}=\{x_{n}\}(\omega_{n}).$
With the fixed $\sigma,$ we can find an $N$ such that $\{0, b_{1},b_{2},...,b_{q}\}$ are critical points
of map $T_{\omega_{N}}$ and $$\{b_{1},b_{2}\}=T^{-1}_{k_{1}}(x_{N-1}),\quad \max\limits_{\omega_{N}}x_{N-1,\omega_{N}}\leq\sigma.$$
Then, for all $\omega_{N},$ we have $\mathcal{L}_{T}f(x_{N-1,\omega_{N}})\geq\mathcal{L}_{T}f(\sigma)$ since $\mathcal{L}_{T}f(x)$ is decreasing.
\begin{equation}\label{3.1}
\begin{split}
\mathcal{L}_{T^{N}}f(1)
&=\sum\limits_{\omega_{N}\in\{1,2\}^{N}}\sum\limits_{i=1}\limits^{q}\frac{(p_{\omega_{N}}f)(b_{i})}{T'_{\omega_{N}}(b_{i})}\\
&=\sum\limits_{\omega_{N}\in\{1,2\}^{N}}\sum\limits_{i=1}\limits^{q}\frac{p_{\omega_{N-1}}(T_{k_{1}}(b_{i}))p_{k_{1}}(b_{i})f(b_{i})}
{T'_{\omega_{N-1}}(T_{k_{1}}(b_{i}))T'_{k_{1}}(b_{i})}\\
&\geq\sum\limits_{\omega_{N}\in\{1,2\}^{N}}\sum\limits_{i=1}\limits^{2}\frac{p_{\omega_{N-1}}(T_{k_{1}}(b_{i}))p_{k_{1}}(b_{i})f(b_{i})}
{T'_{\omega_{N-1}}(T_{k_{1}}(b_{i}))T'_{k_{1}}(b_{i})}\\
&=\sum\limits_{\omega_{N}\in\{1,2\}^{N}}\sum\limits_{i=1}\limits^{2}\frac{p_{\omega_{N-1}}(x_{N-1})p_{k_{1}}(T^{-1}_{k_{1},i}x_{N-1})f(T^{-1}_{k_{1},i}x_{N-1})}
{T'_{\omega_{N-1}}(x_{N-1})T'_{k_{1}}(T^{-1}_{k_{1},i}x_{N-1})}\\
&=\sum\limits_{\omega_{N-1}\in\{1,2\}^{N-1}}\sum\limits_{k_{1}=1}\limits^{2}\frac{p_{\omega_{N-1}}(x_{N-1})}{T'_{\omega_{N-1}}(x_{N-1})}
(\sum\limits_{i=1}\limits^{2}\frac{p_{k_{1}}(T^{-1}_{k_{1},i}x_{N-1})f(T^{-1}_{k_{1},i}x_{N-1})}{T'_{k_{1}}(T^{-1}_{k_{1},i}x_{N-1})})\\
&=\sum\limits_{\omega_{N-1}\in\{1,2\}^{N-1}}\frac{p_{\omega_{N-1}}(x_{N-1,\omega_{N}})}{T'_{\omega_{N-1}}(x_{N-1,\omega_{N}})}
   [\sum\limits_{k_{1}=1}\limits^{2}\sum\limits_{i=1}\limits^{2}
   \frac{p_{k_{1}}(T^{-1}_{k_{1},i}x_{N-1,\omega_{N}})f(T^{-1}_{k_{1},i}x_{N-1,\omega_{N}})}{T'_{k_{1}}(T^{-1}_{k_{1},i}x_{N-1,\omega_{N}})}]\\
&=\sum\limits_{\omega_{N-1}\in\{1,2\}^{N-1}}\frac{p_{\omega_{N-1}}(x_{N-1,\omega_{N}})}{T'_{\omega_{N-1}}(x_{N-1,\omega_{N}})}
   [\mathcal{L}_{T}f(x_{N-1,\omega_{N}})]\\
&\geq\sum\limits_{\omega_{N-1}\in\{1,2\}^{N-1}}\frac{p_{\omega_{N-1}}(x_{N-1,\omega_{N}})}{T'_{\omega_{N-1}}(x_{N-1,\omega_{N}})}
   [\mathcal{L}_{T}f(\sigma)].
\end{split}
\end{equation}
We have $\max\limits_{k\in\{1,2\},x\in[0,\frac{1}{2}]}T'_{k}(x)=2+\alpha$ and $f(T^{-1}_{k,1}\sigma)>f(\sigma)\geq\frac{1}{2(1-\sigma)},$
and by condition (B): $\inf p_{k}(x)\geq\delta>0.$ Therefore, from \eqref{3.1} it remains to show that $\mathcal{L}_{T}f(\sigma)>0.$ Indeed,

\begin{eqnarray*}
\mathcal{L}_{T}f(\sigma)&=&\frac{p_{1}(T^{-1}_{k,1}\sigma)f(T^{-1}_{k,1}\sigma)}{T'_{k}(T^{-1}_{k,1}\sigma)}+
                           \frac{p_{2}(T^{-1}_{k,2}\sigma)f(T^{-1}_{k,2}\sigma)}{T'_{k}(T^{-1}_{k,2}\sigma)}\\
&\geq&\frac{p_{1}(T^{-1}_{k,1}\sigma)f(T^{-1}_{k,1}\sigma)}{T'_{k}(T^{-1}_{k,1}\sigma)}\geq\frac{\delta}{2(1-\sigma)(2+\alpha)}>0.
\end{eqnarray*}

Therefore,
\begin{eqnarray*}
\mathcal{L}^{N}_{T}f(x)&\geq&\mathcal{L}_{T^{N}}f(1)
\geq\gamma>0,
\end{eqnarray*}
where $\gamma=\frac{\delta}{2(1-\sigma)(2+\alpha)}
      \sum\limits_{\omega_{N-1}\in\{1,2\}^{N-1}}\frac{p_{\omega_{N-1}}(x_{N-1,\omega_{N}})}{T'_{\omega_{N-1}}(x_{N-1,\omega_{N}})}$
      with $N,\sigma$ depending only on $A.$
Moreover, for $n>N,$ we set $h(x)=\mathcal{L}^{n-N}_{T}f(x).$ Then $h(x)\in\mathcal{C}_{A},$ and
\begin{eqnarray*}
\mathcal{L}^{n}_{T}f(x)&=&\mathcal{L}^{N}_{T}(\mathcal{L}^{n-N}_{T}f(x))=\mathcal{L}^{N}_{T}h(x)\geq\gamma.
\end{eqnarray*}
Thus, for all $n\geq N, \mathcal{L}^{n}_{T}f(x)\geq \gamma>0.$
For last part of the proposition, suppose that $\rho=\mathcal{L}_{T}\rho \in\mathcal{C}_{A}.$ Clearly,
if set $E$ such that $m(E)=0,$ it follows that $\mu(E)=\int\limits_{E}\rho dm=0.$
Conversely, $\mu(E)=0. \quad \rho=\mathcal{L}^{n}_{T}\rho$ implies that $0=\mu(E)=\int\limits_{E}\rho dm=\int\limits_{E}\mathcal{L}^{n}_{T}\rho dm\geq\gamma m(E).$
Hence, if $\rho=\mathcal{L}_{T}\rho$ then $\mu=\rho m$ is equivalent to $m.$
\end{proof}

\begin{proof}(Theorem \ref{thm3.1})
Since $\mathcal{C}_{A}$ is compact and convex, operator $\mathcal{L}_{T}$ has a fixed point
$f_{\ast}\in\mathcal{C}_{A}$ by Proposition \ref{pro3.4} and the Schauder-Tychonoff fixed point theorem of \cite{DS}. Thus,
random map $T$ admits an ACIM. Next, we give the proof of uniqueness.
Suppose that the random map $T$ has two mutually singular ACIM $\mu_{1}$ and $\mu_{2}.$ From Lemma 3.6, we have
$I=A_{k}\subseteq \textrm{supp}(\mu_{1})$ and $I=A_{k}\subseteq \textrm{supp}(\mu_{2}).$
Therefore, $I\subseteq \textrm{supp}(\mu_{1})\cap \textrm{supp}(\mu_{2}).$ This contradicts the mutual singularity
of $\mu_{1}$ and $\mu_{2}.$ Thus, the random map $T$ has a unique ACIM. By Proposition 3.7,
the invariant density $\rho$ is uniformly bounded below.
\end{proof}

\section{example}
We present an example of a random map $T$ which satisfies assumptions (A) and (B). Consequently, by Theorem \ref{thm3.1} this random map has a unique ACIM.

\begin{example}
\rm{ Let random map $T=\{T_{1}(x),T_{2}(x); p_{1}(x), p_{2}(x)\}$,
for $0<\beta<\alpha<1,$
$$
T_{1}=\begin{cases}
       x(1+2^{\alpha}x^{\alpha}) \quad x\in[0,\frac{1}{2}),\\
       2x-1 \quad \quad \quad x\in[\frac{1}{2},1].
       \end{cases}
\quad\quad
T_{2}=\begin{cases}
       x(1+2^{\beta}x^{\beta}) \quad x\in[0,\frac{1}{2}),\\
       \frac{3}{2}x-\frac{3}{4} \quad \quad \quad x\in[\frac{1}{2},1].
       \end{cases}
$$
and
$$
p_{1}=\begin{cases}
       \frac{1+x^{\alpha}}{3} \quad x\in[0,\frac{1}{2}),\\
       \frac{1}{3} \quad \quad \quad x\in[\frac{1}{2},1].
       \end{cases}
\quad\quad
p_{2}=\begin{cases}
       \frac{2-x^{\alpha}}{3} \quad x\in[0,\frac{1}{2}),\\
       \frac{2}{3} \quad \quad \quad x\in[\frac{1}{2},1].
       \end{cases}
.$$
We have $p_{1}(x),p_{2}(x)\in [0,1], p_{1}(x)+p_{2}(x)=1, \forall x\in[0,1]$
and $\inf\limits_{x\in I}p_{k}(x)\geq \frac{1}{3} >0.$ Thus, condition (B) is satisfied.
We now check condition (A).
First, for $x\in[0,\frac{1}{2}), \frac{p_{2}(x)}{T^{'}_{2}(x)}$ is obviously decreasing. For $\frac{p_{1}(x)}{T^{'}_{1}(x)},$ we show
$$p_{1}^{'}(x)T^{'}_{1}(x)-p_{1}(x)T^{''}_{1}(x)\leq 0, \forall x\in[0,\frac{1}{2}).$$
\begin{eqnarray*}
p_{1}^{'}(x)}{T^{'}_{1}(x)-p_{1}(x)T^{''}_{1}(x)
&=&\frac{1}{3}\alpha x^{\alpha-1}[1+(1+\alpha)2^{\alpha}x^{\alpha}]-\frac{1+x^{\alpha}}{3}[\alpha2^{\alpha}(1+\alpha)x^{\alpha-1}]\\
&=&\frac{\alpha x^{\alpha-1}}{3}[1+(1+\alpha)2^{\alpha}x^{\alpha}-(1+x^{\alpha})2^{\alpha}(1+\alpha)]\\
&=&\frac{\alpha x^{\alpha-1}}{3}[1-2^{\alpha}(1+\alpha)].
\end{eqnarray*}
The term in square bracket is negative, i.e. $1<2^{\alpha}(1+\alpha), \forall \alpha\in (0,1).$\\
So, $\frac{p_{1}(x)}{T^{'}_{1}(x)}$ is decreasing since
$$(\frac{p_{1}(x)}{T^{'}_{1}(x)})^{'}=\frac{p_{1}^{'}(x)T^{'}_{1}(x)-p_{1}(x)T^{''}_{1}(x)}{(T^{'}_{1}(x))^{2}}\leq 0.$$
For $x\in[\frac{1}{2},1], \frac{p_{1}(x)}{T^{'}_{1}(x)}=\frac{1}{6},  \frac{p_{2}(x)}{T^{'}_{2}(x)}=\frac{4}{9}.$ Therefore,
$\sum\limits_{i=1}\limits^{l}\frac{p_{k}(T_{k,i}^{-1}(x))}{T^{'}_{k}(T_{k,i}^{-1}(x))}, 1\leq l\leq 2,$ is decreasing for all $k=1,2,$
since $x\mapsto T^{-1}_{1,1}x$ and $x\mapsto T^{-1}_{2,1}x$ are increasing. This random map
preserves a unique ACIM.}
\end{example}

\section{two dimensional non-uniformly expanding map}
In this section we use the skew product representation of \cite{BBQ} and show that our random maps give rise to an interesting family of 2-dimensional non-uniformly expanding maps  which admit a unique ACIM. This family could serve as a good testing tool for the analysis of 2-dimensional systems with slow mixing.

Let $S(x,\omega): I^{2}\rightarrow I^{2}$ be
$$S(x,\omega)=(T_{k}(x), \varphi_{k,x}(\omega)), \text{ where }
\begin{cases}
\varphi_{1,x}(\omega)=\frac{\omega}{p_{1}(x)}, \quad\quad \omega\in[0,p_{1}(x)),\\
\varphi_{2,x}(\omega)=\frac{\omega-p_{1}(x)}{p_{2}(x)}, \quad  \omega\in[p_{1}(x),1].
\end{cases}
$$
Define $S_{i}=S\mid_{U_{i}}, i=1,2,3,4.$

$S_{1}=(T_{1,1}(x), \varphi_{1,x}(\omega))=(x(1+2^{\alpha}x^{\alpha}), \frac{\omega}{p_{1}(x)}),\quad \quad\quad U_{1}=[0,\frac{1}{2})\times[0,p_{1}(x)),$

$S_{2}=(T_{1,2}(x), \varphi_{1,x}(\omega))=(g_{1}(x),\frac{\omega}{p_{1}(x)}),\quad\quad \quad\quad\quad\quad\quad  U_{2}=[\frac{1}{2},1]\times[0,p_{1}(x)),$

$S_{3}=(T_{2,2}(x), \varphi_{2,x}(\omega))=(g_{2}(x), \frac{\omega-p_{1}(x)}{p_{2}(x)}),\quad\quad \quad\quad\quad  U_{3}=[\frac{1}{2},1]\times[p_{1}(x),1],$

$S_{4}=(T_{2,1}(x), \varphi_{2,x}(\omega))=(x(1+2^{\beta}x^{\beta}), \frac{\omega-p_{1}(x)}{p_{2}(x)}),\quad U_{4}=[0,\frac{1}{2})\times[p_{1}(x),1].$

One can easily check that $(0,0)$ is a fixed point of $S.$ Moreover, the lyapunov exponent in the horizontal direction has value zero at $(0,0).$
Therefore, $S$ is nonuniformly expanding map. Moreover, under conditions $(A)$ and $(B),$ since $T$ has unique ACIM, by \cite{BBQ}, $S$ has a unique ACIM too.

\section{stochastic stability}
In this section we study stochastic stability of random intermittent maps. For this purpose, we write, for $\varepsilon>0, 0<\alpha-\varepsilon<1,$
$$T_{\varepsilon}=\{T_{1}(x),T_{1,\varepsilon}(x); p_{1,\varepsilon}(x), p_{2,\varepsilon}(x)\},$$ where
$$
T_{1}=\begin{cases}
       x(1+2^{\alpha}x^{\alpha}) \quad x\in[0,\frac{1}{2}),\\
       g_{1}(x) \quad \quad \quad x\in[\frac{1}{2},1].
       \end{cases}
\quad\quad
T_{1,\varepsilon}=\begin{cases}
       x(1+2^{\alpha-\varepsilon}x^{\alpha-\varepsilon}) \quad x\in[0,\frac{1}{2}),\\
       g_{1,\varepsilon}(x) \quad \quad \quad x\in[\frac{1}{2},1].
       \end{cases}
.$$

Our main result in this section is the following theorem.
\begin{theorem}\label{thm5.1}
Let $f_{\varepsilon}$ be the unique invariant density of $T_{\varepsilon}.$ Let $f^{\ast}$ be the unique invariant density of $T_{1}.$
If $\lim\limits_{\varepsilon\rightarrow 0} \sup\limits_{x}p_{2,\varepsilon}(x)=0,$
then $\lim\limits_{\varepsilon\rightarrow 0}\|f_{\varepsilon}-f^{\ast}\|_{1}=0.$
\end{theorem}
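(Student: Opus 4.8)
The plan is to exploit the cone-contraction machinery already developed in Section~3, now applied uniformly in $\varepsilon$. The crucial observation is that both $T_1$ and $T_{1,\varepsilon}$ are intermittent maps of the type covered by the cone $\mathcal{C}_A$: since $0<\alpha-\varepsilon<\alpha<1$, both satisfy the key inequality $\int_0^x f\,dm \le A x^{1-\alpha}m(f)$ for a \emph{common} constant $A\ge \frac{4}{1-\alpha}$ (here one must check that the estimate in Lemma~\ref{le3.2}(v) survives with the smaller exponent $\alpha-\varepsilon$, which it does because $(2y)^{(\alpha-\varepsilon)-\alpha}\ge 1$ on $[0,1]$). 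Hence $\mathcal{L}_{T_\varepsilon}\mathcal{C}_A\subseteq\mathcal{C}_A$ for all small $\varepsilon$ by the same argument as Proposition~\ref{pro3.4}, and both invariant densities $f_\varepsilon$ and $f^\ast$ lie in the single compact convex set $\mathcal{C}_A$. First I would record this uniform cone-invariance and, via Lemma~\ref{le3.2}(i)--(ii), the resulting uniform bounds $f_\varepsilon(x)\le A x^{-\alpha}m(f_\varepsilon)=A x^{-\alpha}$ and $f_\varepsilon(x)\le \min\{Ax^{-\alpha},x^{-1}\}$, which give equi-integrability near $0$ uniformly in $\varepsilon$.

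Next I would set up the comparison between the two transfer operators. Writing $\mathcal{L}_{T_\varepsilon}f = P_{T_1}(p_{1,\varepsilon}f)+P_{T_{1,\varepsilon}}(p_{2,\varepsilon}f)$ and $\mathcal{L}_{T_1}f = P_{T_1}f$, I would estimate
\begin{equation}\label{eq:split}
\|\mathcal{L}_{T_\varepsilon}f-\mathcal{L}_{T_1}f\|_1 \le \|P_{T_1}((p_{1,\varepsilon}-1)f)\|_1 + \|P_{T_{1,\varepsilon}}(p_{2,\varepsilon}f)\|_1.
\end{equation}
Since $p_{1,\varepsilon}-1=-p_{2,\varepsilon}$ and $P_{T_1}, P_{T_{1,\varepsilon}}$ preserve the $L^1$-norm of nonnegative functions, both terms on the right of \eqref{eq:split} are bounded by $\sup_x p_{2,\varepsilon}(x)\cdot\|f\|_1$, which tends to $0$ by the hypothesis $\lim_{\varepsilon\to0}\sup_x p_{2,\varepsilon}(x)=0$. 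Thus $\mathcal{L}_{T_\varepsilon}\to\mathcal{L}_{T_1}$ strongly on $L^1$, uniformly on the cone.

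The core of the argument is then a standard stability-of-fixed-points scheme. Writing $f_\varepsilon-f^\ast = \mathcal{L}_{T_\varepsilon}^n f_\varepsilon - \mathcal{L}_{T_1}^n f^\ast$ and telescoping, I would use the triangle inequality to get, for each fixed $n$,
\begin{equation}\label{eq:telescope}
\|f_\varepsilon-f^\ast\|_1 \le \|\mathcal{L}_{T_1}^n f_\varepsilon - \mathcal{L}_{T_1}^n f^\ast\|_1 + \sum_{j=0}^{n-1}\|\mathcal{L}_{T_1}^{n-1-j}(\mathcal{L}_{T_\varepsilon}-\mathcal{L}_{T_1})\mathcal{L}_{T_\varepsilon}^{j}f_\varepsilon\|_1.
\end{equation}
Each summand in \eqref{eq:telescope} is controlled by $\sup_x p_{2,\varepsilon}(x)$ (using that $\mathcal{L}_{T_\varepsilon}^j f_\varepsilon=f_\varepsilon\in\mathcal{C}_A$ and that $\mathcal{L}_{T_1}$ is an $L^1$-contraction), so the whole sum is at most $n\sup_x p_{2,\varepsilon}(x)\to0$ for fixed $n$. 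The first term requires the \emph{rate} of convergence of $\mathcal{L}_{T_1}^n$ to its fixed-point projection on $\mathcal{C}_A$: by Proposition~\ref{pro3.8} the densities are uniformly bounded below, and the Hilbert-metric contraction of $\mathcal{L}_{T_1}$ on $\mathcal{C}_A$ yields $\|\mathcal{L}_{T_1}^n f_\varepsilon - f^\ast\|_1\to0$ as $n\to\infty$, uniformly over $f_\varepsilon\in\mathcal{C}_A$ with $m(f_\varepsilon)=1$. Given $\eta>0$, I would first choose $n$ so large that this first term is $<\eta/2$ for all $\varepsilon$, and then choose $\varepsilon$ small enough that the (finite, $n$-term) sum is $<\eta/2$.

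The main obstacle is the first term in \eqref{eq:telescope}: it demands a genuinely \emph{uniform} contraction rate of $\mathcal{L}_{T_1}$ on the cone, independent of the starting density. This is where the intermittency bites — the spectral gap degenerates because of the neutral fixed point, so one cannot expect exponential contraction, only polynomial. I expect the cleanest route is to invoke the Birkhoff/Hilbert-metric estimate implicit in the cone construction (as in \cite{RM}), which shows that $\mathcal{L}_{T_1}^n$ maps $\mathcal{C}_A$ into a set of uniformly bounded Hilbert diameter and that iterates of any two normalized cone elements converge together in $L^1$; the uniform lower bound $\mathcal{L}_{T_1}^n f\ge\gamma$ from Proposition~\ref{pro3.8} is precisely what guarantees finite Hilbert diameter and hence this uniform convergence. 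Verifying that the diameter bound and contraction constant depend only on $A$ (and not on $\varepsilon$ or the particular density) is the delicate point that makes the interchange of the limits $n\to\infty$ and $\varepsilon\to0$ legitimate.
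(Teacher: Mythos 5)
Your cone-invariance observations and the operator-comparison estimate $\|\mathcal{L}_{T_\varepsilon}f-P_{T_1}f\|_1\le 2\sup_x p_{2,\varepsilon}(x)\,\|f\|_1$ are correct, and the latter is exactly the estimate at the heart of the paper's own proof. The gap is in the first term of your telescoping inequality. You need $\sup\{\|\mathcal{L}_{T_1}^n f-f^\ast\|_1 : f\in\mathcal{C}_A,\ m(f)=1\}\to 0$ as $n\to\infty$, and your justification --- that the lower bound $\mathcal{L}_{T_1}^n f\ge\gamma$ of Proposition \ref{pro3.8} gives finite Hilbert diameter and hence Birkhoff contraction --- fails on two counts. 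First, a uniform lower bound alone does not bound the projective diameter: comparing $f$ and $g$ in the Hilbert metric requires two-sided ratio bounds, and elements of $\mathcal{C}_A$ are only bounded above by $Ax^{-\alpha}m(f)$, so the relevant ratios degenerate at the neutral fixed point. Second, and decisively, if $\mathcal{L}_{T_1}$ mapped $\mathcal{C}_A$ into a set of finite Hilbert diameter, Birkhoff's theorem would yield exponential convergence of $\mathcal{L}_{T_1}^n f$ to $f^\ast$ for observables in the cone; this is known to be false for LSV-type intermittent maps, whose correlations decay only polynomially --- which is precisely why the paper (and \cite{RM}) use this cone only for compactness and positivity, never for contraction. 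Your scheme could be repaired without any rate: since $P_{T_1}$ is non-expansive on $L^1$, the functionals $f\mapsto\|\mathcal{L}_{T_1}^n f-f^\ast\|_1$ are $1$-Lipschitz, so pointwise convergence on the $L^1$-compact set $\mathcal{C}_A\cap\{m(f)=1\}$ would upgrade to uniform convergence; but the pointwise statement $\|P_{T_1}^n f-f^\ast\|_1\to 0$ is the exactness of $T_1$, which is neither proved in the paper nor by you.

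The paper avoids this issue entirely, and its route is strictly cheaper than yours: since $f_\varepsilon\in\mathcal{C}_A$ and $\mathcal{C}_A$ is compact in $L^1$, it extracts a subsequence $f_{\varepsilon_k}\to h$, and then estimates $\|h-P_{T_1}h\|_1\le 2\|h-f_{\varepsilon_k}\|_1+\|\mathcal{L}_{T_{\varepsilon_k}}f_{\varepsilon_k}-P_{T_1}f_{\varepsilon_k}\|_1$, where the second term is killed by exactly the perturbation bound you derived (using $f_{\varepsilon_k}=\mathcal{L}_{T_{\varepsilon_k}}f_{\varepsilon_k}$). Hence any subsequential limit $h$ is a fixed density of $P_{T_1}$, and uniqueness of the $T_1$-ACIM forces $h=f^\ast$; since every subsequence has a further subsequence converging to this same limit, the full family converges in $L^1$. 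No contraction rate, no exactness, and no interchange of the limits $n\to\infty$ and $\varepsilon\to 0$ is needed: uniqueness of the limiting fixed point substitutes for all of it.
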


\begin{proof}
Let $\mathcal{L}_{T_{\varepsilon}}$ be the Perron-Frobenius operator associated with the random map $T_{\varepsilon}.$ By Theorem \ref{thm3.1}, there exist a fixed point $f_{\varepsilon}$ of $\mathcal{L}_{T_{\varepsilon}}$
and $f_{\varepsilon}\in \mathcal{C}_{A},$ for some $A\geq\frac{4}{1-\alpha}.$ Since $\mathcal{C}_{A}$ is a compact set,
there exists a subsequence
$\{f_{\varepsilon_{k}}\}_{\varepsilon_{k}>0}$ of $\{f_{\varepsilon}\}_{\varepsilon>0}$ such that
$$f_{\varepsilon_{k}}\overset{L^{1}}\longrightarrow f^{\ast}\in \mathcal{C}_{A}, \text{ as } \varepsilon_{k}\rightarrow 0.$$
We have
\begin{eqnarray*}
\|f^{\ast}-P_{T_{1}}f^{\ast}\|_{1}&\leq&\|f^{\ast}-f_{\varepsilon_{k}}\|_{1}+\|f_{\varepsilon_{k}}-P_{T_{1}}f_{\varepsilon_{k}}\|_{1}
                                         +\|P_{T_{1}}f_{\varepsilon_{k}}-P_{T_{1}}f^{\ast}\|_{1}\\
&\leq&\|f^{\ast}-f_{\varepsilon_{k}}\|_{1}+\|f_{\varepsilon_{k}}-P_{T_{1}}f_{\varepsilon_{k}}\|_{1}
                                         +\|f_{\varepsilon_{k}}-f^{\ast}\|_{1}\\
&=& 2\|f^{\ast}-f_{\varepsilon_{k}}\|_{1}+\|\mathcal{L}_{T_{\varepsilon_{k}}}f_{\varepsilon_{k}}-P_{T_{1}}f_{\varepsilon_{k}}\|_{1}.                                     \end{eqnarray*}
The first term on the right converges to 0 as $\varepsilon_{k}\rightarrow 0$ by the choice of subsequence.
Moreover, we have
\begin{eqnarray*}
\|\mathcal{L}_{T_{\varepsilon_{k}}}f_{\varepsilon_{k}}-P_{T_{1}}f_{\varepsilon_{k}}\|_{1}
&=&\|P_{T_{1}}(p_{1,\varepsilon_{k}}f_{\varepsilon_{k}})+P_{T_{1,\varepsilon}}(p_{2,\varepsilon_{k}}f_{\varepsilon_{k}})
     -P_{T_{1}}f_{\varepsilon_{k}}\|_{1}\\
&=&\|P_{T_{1}}(p_{1,\varepsilon_{k}}f_{\varepsilon_{k}}-f_{\varepsilon_{k}})+P_{T_{1,\varepsilon}}(p_{2,\varepsilon_{k}}f_{\varepsilon_{k}})\|_{1}\\
&=&\|(P_{T_{1,\varepsilon}}-P_{T_{1}})p_{2,\varepsilon_{k}}f_{\varepsilon_{k}}\|_{1}\\
&\leq&2\|p_{2,\varepsilon_{k}}f_{\varepsilon_{k}}\|_{1}
\leq2\sup\limits_{x}p_{2,\varepsilon_{k}}\rightarrow 0, \text{ as } \varepsilon_{k}\rightarrow 0.
\end{eqnarray*}
Thus, $f^{\ast}=P_{T_{1}}f^{\ast} \quad m-$a.e.


By the uniqueness of $T_{1}-$ACIM, all
subsequences $\{f_{\varepsilon_{k_{i}}}\}_{\varepsilon_{k_{i}}>0}$ of $\{f_{\varepsilon}\}_{\varepsilon>0}$ have $f^{\ast}$ as their common limit point.
Hence, $\|f_{\varepsilon}-f^{\ast}\|_{1}\rightarrow 0,$ as $\varepsilon\rightarrow 0.$
\end{proof}

\section{Appendix}
Let
$$
\tau=\begin{cases}
       x(1+2^{\alpha}x^{\alpha}) \quad x\in[0,\frac{1}{2}),\\
       g(x) \quad \quad \quad x\in[\frac{1}{2},1].
       \end{cases}
$$

We study a deterministic map $\tau: I\rightarrow I,$
with partition $\mathcal{P}=\{I_{1},I_{2}\}, I_{1}=[0,\frac{1}{2}], I_{2}=[\frac{1}{2},1], g(\frac{1}{2})=0, g'(x)>1.$
\begin{lemma}
Let $\nu$ be a $\tau$ ACIM. Then the support of $\nu$ is $I.$
\end{lemma}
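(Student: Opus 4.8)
The plan is to argue by contradiction, exploiting only that $\nu$ is $\tau$-invariant and absolutely continuous. Write $d\nu=h\,dm$ with $h\geq 0$ and $m(h)=1$, and set $S=\textrm{supp}(\nu)$ and $G=I\setminus S$. The first step is soft and records the invariance of the support. Applying the $\tau$-invariance of $\nu$ together with the continuity of $\tau$ on each of its monotone branches $[0,\frac{1}{2})$ and $[\frac{1}{2},1]$, one checks in the usual way that $S$ is forward invariant, i.e. $\tau(S)\subseteq S$: for $x\in S$ and a neighbourhood $V$ of $\tau(x)$, continuity of the relevant branch gives a neighbourhood $U$ of $x$ with $\tau(U)\subseteq V$, whence $\nu(V)=\nu(\tau^{-1}V)\geq\nu(U)>0$; the single discontinuity point $\frac{1}{2}$ is irrelevant since $S$ is closed. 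Dually, the open set $G$ then satisfies $\tau^{-1}(G)\subseteq G$. Finally $\nu(G)=0$ by definition of the support, while $m(S)>0$ because $\nu\ll m$ and $\nu(S)=1$. Hence it suffices to prove $G=\emptyset$.

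Suppose instead $G\neq\emptyset$; then $G$ contains a nonempty open interval $J$. By the backward invariance of $G$ we get $\bigcup_{n\geq 0}\tau^{-n}(J)\subseteq G$. The crux of the proof is therefore the following covering statement, which I would isolate as the key lemma: for every nonempty open interval $J\subseteq I$ one has $m\!\left(\bigcup_{n\geq 0}\tau^{-n}(J)\right)=1$. Granting this, $m(G)=1$, so $m(S)=0$, contradicting $m(S)>0$; thus $G=\emptyset$ and $S=I$, as claimed.

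To prove the covering statement I would use the full branch structure of $\tau$. The first branch $b_{1}=\tau|_{[0,\frac{1}{2})}\colon[0,\frac{1}{2})\to[0,1)$ is an increasing bijection with inverse $\psi$ satisfying $\psi(x)<x$ and $\psi^{n}(x)\to 0$, so $J$ has full preimages $\psi^{n}(J)$ marching toward the neutral fixed point, and together with the preimage under $g$ the set $\bigcup_n\tau^{-n}(J)$ is spread across $I$. To upgrade ``spread across'' (density) to full Lebesgue measure one shows that forward images of any interval eventually contain a fixed interval and then cover $(0,1)$; this is where the main obstacle lies. Because $\tau'(0)=1$, expansion is not uniform and orbits may linger near $0$, so the interval-growth argument must control the time spent near the indifferent fixed point. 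This is precisely the topological exactness (transitivity) of an \textsc{LSV}-type map, and I expect this to be the hard part; I would either cite it or establish it through the uniformly expanding first-return map to $[\frac{1}{2},1]$, combined with a bounded-distortion (Koebe-type) estimate on the smooth branches to guarantee that the Lebesgue-a.e. point is eventually captured by $J$.

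There is a shorter route that avoids the transitivity argument entirely, at the cost of invoking more of the paper's machinery. If one first shows, exactly as in Proposition \ref{pro3.8} specialised to the single deterministic map $\tau$, that the invariant density lies in a cone $\mathcal{C}_{A}$ and is bounded below by a positive constant, then $h\geq\gamma>0$ $m$-a.e., so $\{h>0\}=I$ and $S=I$ is immediate. This trades the covering lemma for the cone estimate together with uniqueness of the $\tau$-ACIM; since the appendix states the lemma for an arbitrary $\tau$-ACIM, I would nonetheless present the self-contained dynamical argument above as the primary proof.
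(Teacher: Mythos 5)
Your reduction in the first two paragraphs is sound, and in one respect it is tidier than the paper's own argument: since your set $G=I\setminus\textrm{supp}(\nu)$ is open, it automatically contains an interval when nonempty, whereas the paper must assert that the closed set $\textrm{supp}(\nu)$ contains an interval. The problem is where your reduction sends the difficulty. The paper's proof is forward and purely topological: the support is closed and forward invariant, and using only expansion ($\tau'>1$ away from the fixed point, no distortion, no ergodicity) it shows that forward images of any interval eventually contain $[0,\frac{1}{2}]$, hence $[0,1)$, so the support is $I$. Your proof is backward and measure-theoretic: it needs the covering lemma $m\bigl(\bigcup_{n\geq 0}\tau^{-n}(J)\bigr)=1$ for every nonempty open interval $J$, i.e.\ that Lebesgue-a.e.\ orbit eventually enters $J$. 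That statement is strictly stronger than the paper's topological covering and does not follow from it: forward images of $J$ covering $[0,1)$ is compatible with a positive-Lebesgue-measure forward-invariant set of points that never visit $J$; ruling this out is exactly a metric ergodicity statement. So the step you flag as ``the hard part'' is not a technical supplement --- it is the entire content of the lemma, and you do not prove it. Moreover, neither of your proposed ways of filling it works under the paper's hypotheses: you cannot simply cite exactness or ergodicity of LSV maps, because $\tau$ here is not the LSV map --- the second branch $g$ is allowed to be non-onto ($g(1)<1$) and is only assumed $C^{1}$ with $g'>1$; and the first-return-map-plus-Koebe route requires bounded distortion, hence $C^{2}$ or H\"older control of $g'$, which is never assumed. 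For merely $C^{1}$ expanding branches bounded distortion is unavailable and ergodicity-type conclusions can genuinely fail ($C^{1}$ expanding maps can even preserve Lebesgue measure without being ergodic), so your covering lemma is not obtainable by citation or by a routine distortion argument at this level of generality.

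Your fallback ``shorter route'' does not repair this. Proposition \ref{pro3.8} lower-bounds $\mathcal{L}^{n}_{T}f$ only for $f$ in the cone $\mathcal{C}_{A}$; the density of an \emph{arbitrary} ACIM $\nu$ need not be decreasing, so it need not lie in $\mathcal{C}_{A}$, and the proposition does not apply to it. Patching this by invoking uniqueness of the $\tau$-ACIM is circular in the context of the paper: full support of every ACIM is precisely the ingredient used (via Lemma \ref{le3.7}) to prove uniqueness in Theorem \ref{thm3.1}. The economical correct route is the paper's: keep your soft first step (forward invariance of the support), then argue topologically that an interval cannot stay inside a single branch forever (in the second branch its length grows geometrically; in the first branch its endpoints would have to increase to a fixed point of that branch, and there is none in $(0,\frac{1}{2}]$), so some forward image contains $\frac{1}{2}$ in its interior; the next image then contains an interval $[0,g(t_{2})]$ attached to the neutral fixed point, and further iterates of the first branch swallow $[0,\frac{1}{2}]$ and then $[0,1)$. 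This uses nothing beyond expansion and monotonicity, which is all the stated hypotheses provide.
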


\begin{proof}
If $g(x)$ is onto, the uniqueness of the $\tau-$ACIM is well known (see \cite{LSV}).
We only consider the case, when $g(1)< 1.$ We have $\tau[0,\frac{1}{2}]=[0,1).$
We need to show that for any interval $J\subset I,$ there exists an $n\geq 1$ such that $\tau^{n}(J)\supseteq [0,\frac{1}{2}].$
If $J\supset I_{k}, k=1,2,$ then obviously $\tau(J)\supseteq [0,\frac{1}{2}].$ If $J\subset  I_{k}.$ Since $m(\tau(J))>m(J),$ there exists a
$j\geq 1$ such that $\tau^{j}(J)$ contains $\frac{1}{2}$ in its interior.

Since $\tau^{j}(J)$ contains the partition point $\frac{1}{2}$ in its interior, i.e. $\tau^{j}(J)\supset (t_{1}, t_{2})$
with $\frac{1}{2}\in (t_{1}, t_{2}).$ Then $\tau[\frac{1}{2}, t_{2}]=[g(\frac{1}{2}), g(t_{2})]=[0, g(t_{2})],$ which contains
the  point $0.$ Then obviously there exists a $l\geq 1$ such that
$\tau^{j+l}(J)\supseteq [0,\frac{1}{2}].$

Let $A$ denote the support of $\nu.$ Since $A$ contains an interval $J$ and $A$ is an invariant set $\tau^{n}(J)\subseteq A, n\geq 1$. Then,
$[0,1)\subset A.$
Consequently (by invariance) $A$ must contain $I.$ Moreover, $A\subset I.$ Therefore, the support of $\nu$ is $I.$
\end{proof}
\subsection*{Acknowledgement}
We would like to thank anonymous referee for useful comments. The referee's comments have greatly improved the presentation of the paper.
\bibliographystyle{amsplain}

\end{document}